\newtheorem{thm}{Theorem}
\newtheorem{rem}[thm]{Remark}
\newtheorem{cor}[thm]{Corollary}
\newtheorem{prop}[thm]{Proposition}
\begin{document}

\title{{\bf Green functions and Martin compactification \\ for killed random walks 
       related to SU(3)}}

\author{{\large Kilian Raschel\footnote{Laboratoire de Probabilit\'es et 
         Mod\`eles Al\'eatoires, Universit\'e Pierre et Marie Curie,
         4 Place Jussieu 75252 Paris Cedex 05, France. 
         E-mail : \texttt{kilian.raschel@upmc.fr}}}}

\date{May 18, 2010}

\maketitle

\vspace{-4mm}

\begin{abstract}
     We consider the random walks killed at the boundary of the quarter plane, with 
     homogeneous non-zero jump probabilities to the eight nearest neighbors and 
     drift zero in the interior, and which admit a positive harmonic polynomial 
     of degree three. For these processes, we find the asymptotic of the Green 
     functions along all infinite paths of states, and from this we deduce that 
     the Martin compactification is the one-point compactification.
\end{abstract}

{\raggedright \textit{Keywords\hspace{-0.2mm} :\hspace{-0.2mm} killed random walks, Green functions, Martin compactification, uniformization.}}
\vspace{3mm}
{\raggedright \textit{AMS $2000$ Subject Classification\hspace{-0.2mm} :\hspace{-0.2mm} primary 60G50, 31C35 ; secondary 30F10.}}

\vspace{-5mm}

\section{Introduction and main results}
\label{Intro}

First introduced for Brownian motion by R.\ Martin in 1941, the concept of Martin compactification
has then been extended for countable discrete time Markov chains by J.\ Doob and G.\ Hunt at the 
end of the fifties. The purpose of this theory is to describe the asymptotic behavior of the Markov 
chains and also to characterize all their non-negative superharmonic and harmonic functions, see 
\textit{e.g.}~\cite{Dynkin}.

For a transient Markov chain with state space $E$, the \emph{Martin compactification} of $E$ is the 
smallest compactification $\hat{E}$ of $E$ for which the Martin kernels $z\mapsto k_{z}^{z_{0}}=
G_{z}^{z_{0}}/G_{z}^{z_{1}}$ extend continuously -- by $G_{z}^{z_{0}}$ we mean the \emph{Green functions} 
of the process, \textit{i.e.}\ the mean number of visits made by the process at $z$ starting at $z_{0}$, 
and we note $z_{1}$ a reference state. $\hat{E}\setminus E$ is usually called the \emph{full Martin boundary}. 
Clearly, for $\alpha\in\hat{E}$, $z_{0}\mapsto k_{\alpha}^{z_{0}}$ 
is superharmonic~; then $\partial_{m}E=\{\alpha \in \hat{E}\setminus E : 
z_{0}\mapsto k_{\alpha}^{z_{0}}\text{ is minimal harmonic}\}$ is called the 
\emph{minimal Martin boundary} -- a harmonic function $h$ is said minimal if $0\leq \tilde{h}\leq h$ 
with $\tilde{h}$ harmonic implies $\tilde{h}=c h$ for some constant $c$. Then, every superharmonic 
(resp.\ harmonic) function $h$ can be written as $h(z_{0})=\int_{\hat{E}}k_{z}^{z_{0}}\mu(\text{d}z)$
(resp.\ $h(z_{0})=\int_{\partial_{m}E}k_{z}^{z_{0}}\mu(\text{d}z)$), where $\mu$ is some finite 
measure, uniquely characterized in the second case above. 

\medskip

The case of the \emph{homogeneous} random walks in $\mathbb{Z}^{d}$ is now completely understood. 
First, their minimal Martin boundary is found in~\cite{DOO1}, thanks to Choquet-Deny theory. 
Furthermore, in the case of a \emph{non-zero drift}, P.~Ney and F.~Spitzer find, in their 
well-known paper \cite{NS}, the asymptotic of the Green functions, by using exponential changes 
of measure and the local limit theorem~; this gives consequently the concrete realization of 
the Martin compactification, in that case the sphere. Additionally, in the case of a \emph{drift 
zero}, the asymptotic of the Green functions is computed in~\cite{SPI}~; it follows that the 
Martin compactification consists in the one-point compactification.

\medskip

Results on Martin boundary for \emph{non-homogeneous} random walks are scarcer and more recent. 
We concentrate here our analysis on important and recently extensively studied examples
that are the random walks in $\mathbb{Z}^{d}$ killed at the boundary of cones. They are related to
many areas of probability, as \textit{e.g.}\ to non-colliding random walks or quantum processes.

\medskip
    
On the one hand, the case of the \emph{non-zero drift} is now rather well studied.

In~\cite{Bi3}, P.\ Biane considers quantum random walks on the dual of 
compact Lie groups and, by restriction, arrives at classical random walks 
with non-zero drift killed at the boundary of the Weyl chamber of the dual. 
Solving an equation of Choquet-Deny type, he finds the minimal Martin boundary 
of these processes.

When the compact Lie group is SU($d$) and the associated random walk has non-zero drift,
the Martin compactification is obtained in~\cite{COL}, by finding the asymptotic 
of the Green kernels.

Recently, in~\cite{III}, I.\ Ignatiouk-Robert obtains the Martin compactification of 
the random walks in $\mathbb{Z}_{+}^{d}$ with non-zero drift and killed at the boundary. 
She uses there an original approach based on large deviations theory in order to compute 
the asymptotic of the Martin kernels. Unfortunately, her methods seem quite difficult to 
extend up to the case of the drift zero. Also, they do not provide the asymptotic of the 
Green functions.
   
This asymptotic in the case of the dimension $d=2$ is found in~\cite{KR}.

\medskip

On the other hand, results on Martin boundary for killed random walks 
with \emph{drift zero} are quite rare. The simplest example of the 
cartesian product is due to~\cite{PW}. A more interesting case comes
again from quantum processes~: in~\cite{Bi5}, P.~Biane shows that the 
minimal Martin boundary of the random walk with zero drift and killed 
at the boundary of the Weyl chamber of the dual of SU($d$) is reduced 
to one point.
     
\medskip

It is immediate from~\cite{Bi1} that this classical random walk in the 
Weyl chamber of the dual of SU($d$) is, for $d=3$, the random walk spatially 
homogeneous on the lattice $\{i+j\exp(\imath \pi/3),\ (i,j)\in \mathbb{Z}^{2}\}$ 
with jump probabilities as represented on the left of Picture~\ref{Lie}. 

 \begin{figure}[!ht]
 \begin{center}
 \begin{picture}(310.00,100.00)
 \includegraphics{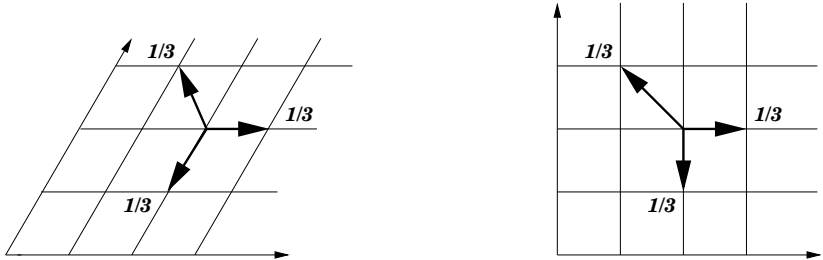}
 \end{picture}
 \end{center}
 \caption{Random walk in the Weyl chamber of the dual of SU(3)}
 \label{Lie}
 \end{figure}

Obviously, a suitable linear transformation maps the lattice $\{i+j\exp(\imath \pi/3)
: (i,j)\in \mathbb{Z}^{2}\}$ into $\mathbb{Z}^{2}$, see Picture~\ref{Lie}~; in this 
way, the Weyl chamber $\{i+j\exp(\imath \pi/3) : (i,j)\in \mathbb{Z}_{+}^{2}\}$
becomes $\mathbb{Z}_{+}^{2}$. For $d=3$, the killed random walk considered by
P.~Biane in~\cite{Bi1} can therefore be viewed as an element of 

\medskip

{\raggedright $\mathscr{P}=\{$random walks in $\mathbb{Z}_{+}^{2}$ 
with non-zero jump probabilities $(p_{i,j})_{-1\leq i,j\leq 1}$}

{\hfill to the eight nearest neighbors, with drift zero and killed at the boundary$\}$}

\medskip
 
{\raggedright with jump 
probabilities as represented on the right of Picture~\ref{Lie} -- above,
``drift zero'' means that $p_{1,1}+p_{1,0}+p_{1,-1}=p_{-1,1}+p_{-1,0}+
p_{-1,-1}$ and $p_{1,1}+p_{0,1}+p_{-1,1}=p_{1,-1}+p_{0,-1}+p_{-1,-1}$. 
In this setting, P.~Biane proves, in~\cite{Bi5}, that $(i_{0},j_{0})\mapsto i_{0}j_{0}(i_{0}+
j_{0})$ is the only positive harmonic function for this process.}

By the same methods, it can certainly be shown that there is only one positive 
harmonic function for the ``dual'' walk, namely for the random walk with jump 
probabilities $p_{-1,0}=p_{0,1}=p_{1,-1}=1/3$. In particular, if we set 
$\mathscr{P}^{c}=\{$random walks of $\mathscr{P}$ such that $p_{0,-1}=
p_{-1,1}=p_{1,0}=\mu$, $p_{-1,0}=p_{0,1}=p_{1,-1}=\nu$, $\mu+\nu=1/3\}$ 
--~in other words, $\mathscr{P}^{c}$ is the set of all cartesian products of the random walk
on the dual of SU(3) with its dual, see on the left of Picture~\ref{Part_Fami} below~--, it follows
from~\cite{PW} that any process of $\mathscr{P}^{c}$ has also a minimal Martin boundary 
reduced to one point.

In this paper, we introduce the set 
     \begin{equation*}
          \mathscr{P}_{1,0}=\{\text{random walks of}\ \mathscr{P} 
          \ \text{for which}\ (i_{0},j_{0})\mapsto i_{0}j_{0}(i_{0}+j_{0})\ \text{is harmonic}\}. 
     \end{equation*}
Note that we have $\mathscr{P}^{c} \subset \mathscr{P}_{1,0}$, but we will see, 
in Remark~\ref{description_walks}, that the inclusion is strict. More generally, 
we define  
     \begin{equation}
     \label{description_jump_probabilities}
          \mathscr{P}_{\alpha,\beta}=\{\text{random walks of}\ \mathscr{P}\ \text{for 
          which}\ (i_{0},j_{0})\mapsto i_{0}j_{0}(i_{0}+\alpha j_{0}+\beta)\ \text{is harmonic}\}.
     \end{equation}

Since any harmonic function for a killed process takes the value zero on the boundary, 
$\mathscr{P}_{\alpha,\beta}$ is in fact exactly the set of \emph{all} killed random walks 
in $\mathbb{Z}_{+}^{2}$ to the eight nearest neighbors admitting a harmonic polynomial 
of degree three. 

The description of the set $\mathscr{P}_{\alpha,\beta}$ in terms of the $(p_{i,j})_{i,j}$ 
is rather cumbersome but not difficult to obtain, it is postponed until Remark
\ref{description_walks}. Let us just note here that if $\alpha>2$ or $\alpha<1/2$, 
then for all $\beta$, $\mathscr{P}_{\alpha,\beta }=\emptyset$~; if $\alpha=1/2$ or $\alpha=2$, 
then for all $\beta\neq 0$, $\mathscr{P}_{\alpha,\beta }=\emptyset$, and $\mathscr{P}_{\alpha,0}$ 
is reduced to one walk~; and if $\alpha \in]1/2,2[$ and $|\beta|$ is small enough, then 
$\mathscr{P}_{\alpha,\beta }$ is a (non-empty) set with two free parameters, properly 
described in Remark~\ref{description_walks}. We have represented on the right of Picture~\ref{Part_Fami} 
an example of a process belonging to $\mathscr{P}_{\alpha,0}$, for any $\alpha\in[1/2,2]$.

\medskip

Moreover, note that considering in this paper $\mathscr{P}_{\alpha,\beta}$ is all the more 
natural as the set $\{$random walks of $\mathscr{P}$ for which $(i_{0},j_{0})\mapsto i_{0}
j_{0}$ is harmonic$\}$ is studied in \cite{Ras}.

 \begin{figure}[!ht]
 \begin{center}
 \begin{picture}(305.00,100.00)
 \includegraphics{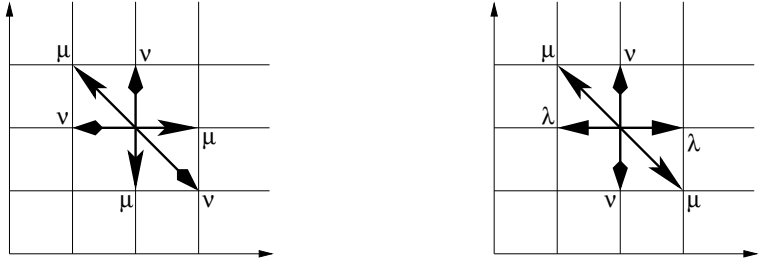}
 \end{picture}
 \end{center}
 \caption{On the left, a generic walk of $\mathscr{P}^{c}$ ($\mu+\nu=1/3$)~; on the 
          right, an~example of walk of $\mathscr{P}_{\alpha,0}$ ($\lambda=\alpha(\alpha-1/2)/
          [2-\alpha+2\alpha^{2}]$, $\mu=(\alpha/2)/[2-\alpha+2\alpha^{2}]$ and~$\nu=
          (1-\alpha/2)/[2-\alpha+2\alpha^{2}]$)}
 \label{Part_Fami}
 \end{figure}

Our first result deals with the Green functions 
--~below, $(X,Y)$ denotes the coordinates of the random walk 
and $\mathbb{E}_{(i_{0},j_{0})}$ the conditional expectation 
given $(X(0),Y(0))=(i_{0},j_{0})$~--
     \begin{equation}
     \label{def_Green_functions}
          G_{i,j}^{i_{0},j_{0}}=\mathbb{E}_{(i_{0} , j_{0})}
          \Bigg[ \sum_{k\geq 0}\textbf{1}_{\left\{
          \left(X\left(k\right),Y\left(k\right)\right)=
          \left(i,j \right) \right\}}\Bigg],
     \end{equation} 
and, more precisely, with their asymptotic along \emph{all} paths of states.
     \begin{thm}
     \label{Main_theorem_Green_functions}
     Suppose that the process belongs to $\mathscr{P}_{\alpha,\beta}$.
     Then the Green functions~(\ref{def_Green_functions}) admit the following
     asymptotic as $i+j\to \infty$ and $j/i\to \tan(\gamma)$, $\gamma$ 
     lying in $[0,\pi/2]$~:
     \begin{equation}
     \label{main_theorem_Green_functions}
          G_{i,j}^{i_{0},j_{0}} \sim C
          i_{0}j_{0}\left(i_{0}+\alpha j_{0}+\beta\right) 
          \frac{ i j (i+\alpha j)}{\big[i^{2}+\alpha i j 
          +\alpha^{2}j^{2}\big]^{3}},
     \end{equation}
     where $C>0$ depends only on the parameters $(p_{i,j})_{i,j}$ and is made explicit in the proof.
\end{thm}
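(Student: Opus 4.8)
The plan is to follow the analytic, kernel-method approach to random walks in the quarter plane, combined with the uniformization of the algebraic curve attached to the walk, and then to deduce~(\ref{main_theorem_Green_functions}) from a two-variable singularity analysis. (The shape of the right-hand side is already predicted by the Doob $h$-transform by the harmonic polynomial, which turns the killed walk into an honest Markov chain escaping to infinity, together with the analogy with Brownian motion killed in a cone of opening $\pi/3$; but pinning down the exact constant $C$ requires the full machinery.) Introduce the generating function $G^{i_0,j_0}(x,y)=\sum_{i,j\geq 1}G_{i,j}^{i_0,j_0}\,x^{i-1}y^{j-1}$; each coefficient is finite because the walk is killed, and for a walk of $\mathscr{P}_{\alpha,\beta}$ the whole series even converges at $(1,1)$. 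The standard one-step decomposition of the Green functions gives the functional equation
\begin{equation*}
Q(x,y)\,G^{i_0,j_0}(x,y)=q(x,y)\,g^{i_0,j_0}(x)+\widetilde q(x,y)\,\widetilde g^{i_0,j_0}(y)+q_{0}(x,y)\,G_{1,1}^{i_0,j_0}-x^{i_0}y^{j_0},
\end{equation*}
where $Q(x,y)=xy\bigl(\sum_{-1\leq i,j\leq 1}p_{i,j}x^{i}y^{j}-1\bigr)$ is the kernel of the walk (a polynomial of degree two in $x$ and in $y$), $q,\widetilde q,q_{0}$ are explicit polynomials, $g^{i_0,j_0}(x)=\sum_{i\geq 1}G_{i,1}^{i_0,j_0}x^{i-1}$ and $\widetilde g^{i_0,j_0}(y)=\sum_{j\geq 1}G_{1,j}^{i_0,j_0}y^{j-1}$ are the unknown generating functions along the lines $\{j=1\}$ and $\{i=1\}$, and the monomial $x^{i_0}y^{j_0}$ carries all the dependence on the starting point. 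The drift-zero hypothesis makes $Q$, $\partial_{x}Q$ and $\partial_{y}Q$ vanish simultaneously at $(1,1)$: that point is a node of $\{Q=0\}$ --- its tangent cone there, whose matrix is proportional to the (positive definite) covariance matrix $\Sigma$ of the jumps, splits over $\mathbb{C}$ into two distinct lines --- and the presence of this double point is exactly what forces the Green functions to decay polynomially rather than exponentially.

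Next I would analyse and uniformize the curve $\{Q=0\}$. Viewed as a two-sheeted cover of the $x$-sphere it is branched over the four roots of the $y$-discriminant of $Q$; being an irreducible curve of bidegree $(2,2)$ with a node at $(1,1)$, it has geometric genus zero, so it admits a rational uniformization $\omega\mapsto(x(\omega),y(\omega))$ with $x,y$ rational of degree two, under which the two Galois involutions --- exchanging the two $y$-roots above a given $x$, resp.\ the two $x$-roots above a given $y$ --- lift to Möbius involutions of the $\omega$-sphere, and the node $(1,1)$ lifts to two points. Restricting the functional equation to $\{Q=0\}$ annihilates its left-hand side and ties $g^{i_0,j_0}(x(\omega))$, $\widetilde g^{i_0,j_0}(y(\omega))$ and the source together along the curve; combining this with the two involutions, one sees that $\omega\mapsto g^{i_0,j_0}(x(\omega))$ is meromorphic and satisfies a Riemann--Carleman-type boundary value problem --- along a suitable contour it coincides, up to an explicit additive term, with its image under one of the involutions. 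Solving this problem by means of a conformal gluing function and a Cauchy-type integral provides closed-form expressions for $g^{i_0,j_0}$ and $\widetilde g^{i_0,j_0}$, hence, through the functional equation, for $G^{i_0,j_0}(x,y)$ itself.

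The asymptotics are then extracted from the Cauchy representation $G_{i,j}^{i_0,j_0}=(2\pi\imath)^{-2}\oint\oint G^{i_0,j_0}(x,y)\,x^{-i-1}y^{-j-1}\,\mathrm{d}x\,\mathrm{d}y$. Because the drift is zero, the saddle point governing this integral in the regime $i+j\to\infty$, $j/i\to\tan\gamma$ sits exactly at $(x,y)=(1,1)$ for every $\gamma$ (for a non-zero drift it would be an interior critical point and one would obtain exponential decay, as in \cite{NS,COL,III}; drift zero pins it at $(1,1)$). What is needed is the precise local behaviour of $G^{i_0,j_0}$ near $(1,1)$: plugging $x=1-u$, $y=1-v$ into the explicit solution of the boundary value problem, one obtains a logarithmic-type singularity whose leading singular part is homogeneous of degree one in $(u,v)$, built from the quadratic form attached to $\Sigma$ --- equivalently, from the quadratic form attached to $\Sigma^{-1}$, which up to a positive factor is $i^{2}+\alpha ij+\alpha^{2}j^{2}$ --- and weighted by the degree-three harmonic polynomial. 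Feeding this local expansion into the double integral and performing the (uniform in $\gamma$) saddle-point evaluation produces exactly the right-hand side of~(\ref{main_theorem_Green_functions}); the factor $i_{0}j_{0}(i_{0}+\alpha j_{0}+\beta)$ arises from the way the source $x^{i_0}y^{j_0}$ propagates through the solution near $(1,1)$ --- so the computation re-proves, along the way, that this polynomial spans the positive harmonic functions of the walk --- and $C$ comes out as an explicit combination of $\det\Sigma$ and the values at $(1,1)$ of the functions produced by the boundary value problem.

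The main obstacle is this last step: the uniform-in-$\gamma$ passage from the explicit but intricate formula for $G^{i_0,j_0}(x,y)$ to the coefficient asymptotics. Three difficulties stand out. First, one must verify that $(1,1)$ is the only singularity of $G^{i_0,j_0}$ on its torus of convergence and that no other feature --- zeros of $q$, $\widetilde q$, $q_{0}$, or the contour of the boundary value problem --- contributes at the same order. Second, one must expand $G^{i_0,j_0}$ at $(1,1)$ to sufficient precision, which requires controlling both the uniformization and the solution of the boundary value problem near that point; this local study is delicate precisely because $(1,1)$ is a singular point of the curve, with two preimages under the uniformization. Third, the saddle-point estimate has to be made uniform as $\gamma$ ranges over the \emph{closed} interval $[0,\pi/2]$, including the two boundary directions $\gamma=0$ and $\gamma=\pi/2$, along which the walk escapes near an axis and the saddle at $(1,1)$ approaches the lines $\{x=0\}$ or $\{y=0\}$ carrying $g^{i_0,j_0}$ and $\widetilde g^{i_0,j_0}$; there a separate, direction-adapted argument is needed. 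Once these points are settled, assembling the pieces and simplifying the constant $C$ is routine.
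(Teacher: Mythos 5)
Your overall framework --- the functional equation obtained from the one-step decomposition, the genus-zero kernel curve with a node at $(1,1)$ forced by the zero drift, its rational uniformization, and a saddle-point evaluation pinned at the node (with its two preimages on the uniformization sphere) --- is exactly the one the paper uses. But there is a genuine gap at the decisive step. You propose to solve for the unknown boundary generating functions via a Riemann--Carleman boundary value problem with a conformal gluing function, and then to extract the third-order local expansion of $G^{i_0,j_0}$ at $(1,1)$ from that solution. The paper never solves a BVP: it proves that for every walk of $\mathscr{P}_{\alpha,\beta}$ the group $W=\langle\xi,\eta\rangle$ of the walk is the dihedral group of order six (Proposition~\ref{Prop_value_K}, $K=\exp(-\imath\pi/3)$), and uses the resulting telescoping continuation to show that the entire numerator $H^{i_0,j_0}+\widetilde H^{i_0,j_0}+h_{0,0}^{i_0,j_0}-x^{i_0}y^{j_0}$ equals, on the relevant sector, the finite alternating orbit sum $-\sum_{w\in W}(-1)^{l(w)}x^{i_0}y^{j_0}(w(z))$ (identity~(\ref{eq_s_s_functions_2})). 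This closed form is what makes the saddle-point input computable: a direct Taylor expansion of the orbit sum at $z=0$ shows it vanishes to order $z^{3}$ with coefficient proportional to $i_0j_0(i_0+\alpha j_0+\beta)$, which is precisely where the cubic harmonic polynomial and the $|(i,j)|^{-3}$ decay come from. Without this identity, the local expansion you need at the node must be read off a Cauchy-type integral against a gluing function, and obtaining it to the required (third) order is exactly the difficulty the paper singles out as the obstruction to treating general walks of $\mathscr{P}$; your proposal does not supply a way around it.

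A second, related gap: the steps you list as ``the main obstacle'' --- verifying that only $(1,1)$ contributes, controlling the expansion at the singular point of the curve, and making the estimate uniform up to the boundary directions $\gamma=0,\pi/2$ --- are not incidental technicalities but the substance of the proof, and they are left unresolved. In the paper these are handled concretely on the uniformization sphere: the contour is a single ray $\exp(\imath\theta)[0,\infty]$ with $\theta=\arg(\rho_{j/i})\in[2\pi/3,\pi]$ chosen tangent to the steepest-descent path (using the identity $\Omega_y=\alpha\Omega_x$, special to $\mathscr{P}_{\alpha,\beta}$), the intermediate part of the contour is killed by a uniform bound $|x(z)|,|y(z)|\geq 1+\eta\epsilon$ away from $z=0,\infty$, and the boundary directions $\gamma=0$ and $\gamma=\pi/2$ require carrying the expansion of the orbit sum to two terms ($z^{3}$ and $z^{6}$) because the leading term degenerates there. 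Your two-variable singularity analysis of the double Cauchy integral would have to reproduce all of this without the reduction to a one-dimensional contour, which is substantially harder and is not sketched. As written, the proposal is a plausible programme in the right spirit, but it omits the finite-group/orbit-sum mechanism that actually makes the theorem provable and defers the analytic core of the argument.
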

In the particular case of the random walk killed at the boundary of the Weyl chamber
of the dual of SU($3$), the asymptotic~(\ref{main_theorem_Green_functions}) is, for 
$\gamma\in]0,\pi/2[$, proved in \cite{Bi1}. Theorem~\ref{Main_theorem_Green_functions} 
actually completes this result for that very particular random walk and, in fact, gives 
the asymptotic of the Green functions for a much larger class of processes. 

In addition, Theorem~\ref{Main_theorem_Green_functions} has the following consequence.  
    
\begin{cor}
\label{corollary_Martin_boundary}
The Martin compactification of any process belonging to $\mathscr{P}_{\alpha,\beta}$ 
is the one-point compactification.
\end{cor}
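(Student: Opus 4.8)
The plan is to derive Corollary~\ref{corollary_Martin_boundary} directly from the precise Green function asymptotic in Theorem~\ref{Main_theorem_Green_functions}. Recall that the Martin compactification is, by definition, the smallest compactification of $E=\mathbb{Z}_+^2$ (with $E$ here the set of states the walk visits with positive probability) on which all Martin kernels $(i,j)\mapsto k_{(i,j)}^{(i_0,j_0)}=G_{i,j}^{i_0,j_0}/G_{i,j}^{i_1,j_1}$ extend continuously. So it suffices to show that along \emph{every} sequence $(i_n,j_n)$ leaving every finite subset of $\mathbb{Z}_+^2$, the Martin kernel $k_{(i_n,j_n)}^{(i_0,j_0)}$ converges, and that the limit does not depend on the chosen sequence; that common limit will then be the unique boundary point, giving the one-point compactification.

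First I would fix the reference state $(i_1,j_1)$ and form the ratio using \eqref{main_theorem_Green_functions}. The crucial observation is that the $\gamma$-dependent and $(i,j)$-dependent factor
\[
\frac{ij(i+\alpha j)}{\big[i^2+\alpha ij+\alpha^2 j^2\big]^3}
\]
appears identically in the numerator $G_{i,j}^{i_0,j_0}$ and the denominator $G_{i,j}^{i_1,j_1}$, so it cancels in the ratio. Hence, whenever $i+j\to\infty$ with $j/i\to\tan\gamma$ for some $\gamma\in[0,\pi/2]$, Theorem~\ref{Main_theorem_Green_functions} gives
\[
k_{(i,j)}^{(i_0,j_0)}\longrightarrow \frac{i_0 j_0(i_0+\alpha j_0+\beta)}{i_1 j_1(i_1+\alpha j_1+\beta)},
\]
a limit that is manifestly independent of $\gamma$. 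Since $[0,\pi/2]$ is compact, any sequence $(i_n,j_n)$ with $i_n+j_n\to\infty$ has a subsequence along which $j_n/i_n$ converges in $[0,\pi/2]$ (allowing the value $+\infty$, i.e.\ $\gamma=\pi/2$); along that subsequence the Martin kernel converges to the above ratio. As the limit is the same for every such subsequence, the full sequence $k_{(i_n,j_n)}^{(i_0,j_0)}$ converges to it. Therefore every sequence of states going to infinity is Martin-convergent to one and the same boundary point, which means $\hat E\setminus E$ is a single point and $\hat E$ is the one-point compactification.

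I would also remark, for completeness, that the limiting kernel $(i_0,j_0)\mapsto i_0 j_0(i_0+\alpha j_0+\beta)/[i_1 j_1(i_1+\alpha j_1+\beta)]$ is precisely the normalized harmonic polynomial defining $\mathscr{P}_{\alpha,\beta}$; this is consistent with (indeed reproves, via the integral representation) the fact established in~\cite{Bi5} that the minimal Martin boundary is reduced to one point, and shows here that the full and minimal Martin boundaries coincide. The only genuine subtlety to handle carefully is that Theorem~\ref{Main_theorem_Green_functions} is stated for convergence of the direction $j/i$ to a definite value $\tan\gamma$, whereas the definition of the Martin compactification quantifies over \emph{arbitrary} sequences leaving all finite sets; the subsequence/compactness argument above is exactly what bridges that gap, and it works only because the limit turned out to be $\gamma$-independent. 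No estimate here is hard — all the analytic difficulty has already been absorbed into Theorem~\ref{Main_theorem_Green_functions} — so this corollary is essentially a short deduction.
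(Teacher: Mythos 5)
Your proposal is correct and is essentially the argument the paper intends: the corollary is stated as an immediate consequence of Theorem~\ref{Main_theorem_Green_functions}, obtained by forming the Martin kernel, cancelling the common $(i,j)$-dependent factor, observing that the resulting limit $i_0j_0(i_0+\alpha j_0+\beta)/[i_1j_1(i_1+\alpha j_1+\beta)]$ is independent of the direction $\gamma$, and upgrading from directional convergence to arbitrary sequences by the standard subsequence/compactness argument. Your added remarks (identification of the limit kernel with the normalized harmonic polynomial, and the need for the compactness step) are accurate and fill in details the paper leaves implicit.
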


Furthermore, the asymptotic~(\ref{main_theorem_Green_functions}) of the 
Green functions in the two limit cases $\gamma=0$ and $\gamma=\pi/2$ 
enables us to obtain the asymptotic of the absorption probabilities
     \begin{equation}
     \label{absorption_probabilities}
          \left.\begin{array}{ccc}
          \displaystyle \phantom{\frac{n}{n}}h_{i}^{i_{0},j_{0}}&=&\displaystyle \mathbb{P}_{\left(i_{0} , j_{0}\right)}
          \left[\exists k\geq 1 : (X(k),Y(k))= (i,0)\right],\phantom{\frac{n}{n}}\\
          \displaystyle \phantom{\frac{n}{n}}\widetilde{h}_{j}^{i_{0},j_{0}}&=&\displaystyle \mathbb{P}_{\left(i_{0},j_{0}
          \right)}\left[\exists k\geq 1 : (X(k),Y(k))= (0,j)\right].\phantom{\frac{n}{n}}
          \end{array}\right.
     \end{equation}
Indeed, the absorption probabilities~(\ref{absorption_probabilities}) are related to the 
Green functions~(\ref{def_Green_functions}) through 
     \begin{eqnarray*}
          h_{i}^{i_{0},j_{0}}&=&p_{1,-1}G_{i-1,1}^{i_{0},j_{0}}+
                                p_{0,-1}G_{i,1}^{i_{0},j_{0}}+
                                p_{-1,-1}G_{i+1,1}^{i_{0},j_{0}},\\
          \widetilde{h}_{j}^{i_{0},j_{0}}&=&
          p_{-1,1}G_{1,j-1}^{i_{0},j_{0}}+p_{-1,0}G_{1,j}^{i_{0},j_{0}}+
          p_{-1,-1}G_{1,j+1}^{i_{0},j_{0}}, 
     \end{eqnarray*}
so that, from Theorem~\ref{Main_theorem_Green_functions},
we immediately obtain the following result.
\begin{cor}
     Suppose that the process belongs to $\mathscr{P}_{\alpha,\beta}$.
     Then the absorption probabilities (\ref{absorption_probabilities})
     admit the following asymptotic as $i\to \infty$~:
     \begin{equation*}
     \label{main_theorem_absorption_probabilities}
          h_{i}^{i_{0},j_{0}} \sim C(p_{1,-1}+p_{0,-1}+p_{-1,-1})
          i_{0}j_{0}\left(i_{0}+\alpha j_{0}+\beta\right)\frac{1}{i^{4}},
     \end{equation*}
     where $C>0$ is the same constant as is the statement of
     Theorem~\ref{Main_theorem_Green_functions}. 
     
     The same asymptotic holds for $\tilde{h}_{i}^{i_{0},j_{0}}$, after having replaced
     $(p_{1,-1}+p_{0,-1}+p_{-1,-1})$~above by $(p_{-1,1}+p_{-1,0}+p_{-1,-1})/\alpha^{5}$.
\end{cor}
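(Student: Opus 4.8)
The plan is to derive everything directly from Theorem~\ref{Main_theorem_Green_functions}, exactly as announced in the text. First I would invoke the two identities recalled just above the statement, which express $h_{i}^{i_{0},j_{0}}$ and $\widetilde{h}_{j}^{i_{0},j_{0}}$ as fixed linear combinations of Green functions evaluated near the horizontal, respectively the vertical, axis; these come from a one-step decomposition of the absorbed paths according to the last interior site visited before killing, so nothing separate needs to be proved for them. The whole remaining content is then a limit computation.

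For $h_{i}^{i_{0},j_{0}}$: the Green functions $G_{i-1,1}^{i_{0},j_{0}}$, $G_{i,1}^{i_{0},j_{0}}$, $G_{i+1,1}^{i_{0},j_{0}}$ all have second coordinate $1$ and first coordinate tending to infinity, hence lie along the direction $j/i\to 0$, which is the endpoint case $\gamma=0$ allowed in Theorem~\ref{Main_theorem_Green_functions} since it is stated on the \emph{closed} interval $[0,\pi/2]$. Specializing~(\ref{main_theorem_Green_functions}) to $j=1$ gives
\begin{equation*}
     G_{i,1}^{i_{0},j_{0}}\ \sim\ C\,i_{0}j_{0}(i_{0}+\alpha j_{0}+\beta)\,\frac{i(i+\alpha)}{\big[i^{2}+\alpha i+\alpha^{2}\big]^{3}}\ \sim\ \frac{C\,i_{0}j_{0}(i_{0}+\alpha j_{0}+\beta)}{i^{4}},
\end{equation*}
and the shifts $i\mapsto i\pm 1$ leave this leading term unchanged. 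Summing the three contributions with the coefficients $p_{1,-1}$, $p_{0,-1}$, $p_{-1,-1}$ yields the announced asymptotic, with prefactor $p_{1,-1}+p_{0,-1}+p_{-1,-1}$.

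For $\widetilde{h}_{j}^{i_{0},j_{0}}$ I would argue symmetrically along the vertical axis: $G_{1,j-1}^{i_{0},j_{0}}$, $G_{1,j}^{i_{0},j_{0}}$, $G_{1,j+1}^{i_{0},j_{0}}$ have first coordinate $1$ and second coordinate tending to infinity, i.e.\ $j/i\to+\infty$, the endpoint case $\gamma=\pi/2$, again covered by Theorem~\ref{Main_theorem_Green_functions}. Putting $i=1$ in~(\ref{main_theorem_Green_functions}) and letting $j\to\infty$, the numerator $j(1+\alpha j)$ is $\sim\alpha j^{2}$ and the denominator $\big[1+\alpha j+\alpha^{2}j^{2}\big]^{3}$ is $\sim\alpha^{6}j^{6}$, so $G_{1,j}^{i_{0},j_{0}}\sim C\,i_{0}j_{0}(i_{0}+\alpha j_{0}+\beta)/(\alpha^{5}j^{4})$, the shifts in $j$ again being negligible; summing with $p_{-1,1}$, $p_{-1,0}$, $p_{-1,-1}$ produces the stated asymptotic together with the extra factor $1/\alpha^{5}$.

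I do not expect any real obstacle. The only points to watch are that the argument requires evaluating~(\ref{main_theorem_Green_functions}) precisely at the two endpoints $\gamma=0$ and $\gamma=\pi/2$ --~which is exactly why it matters that Theorem~\ref{Main_theorem_Green_functions} is stated on the closed interval $[0,\pi/2]$~-- and that replacing each of the finitely many shifted Green functions by its leading equivalent and then adding them is legitimate because each equivalence is a genuine ``$\sim$'' as the free coordinate goes to infinity with the coefficients fixed. In short, all the hard work has already been done in the proof of Theorem~\ref{Main_theorem_Green_functions}, and this Corollary is, as the paper states, an immediate consequence of it.
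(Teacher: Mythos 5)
Your proposal is correct and follows exactly the paper's route: the paper states the same one-step identities expressing $h_{i}^{i_{0},j_{0}}$ and $\widetilde{h}_{j}^{i_{0},j_{0}}$ as linear combinations of Green functions along the two axes, and then declares the corollary an immediate consequence of Theorem~\ref{Main_theorem_Green_functions}; your specialization to $j=1$ (resp.\ $i=1$) and the resulting factors $1/i^{4}$ and $1/(\alpha^{5}j^{4})$ are the intended computation. Your emphasis on the endpoint cases $\gamma=0$ and $\gamma=\pi/2$ being included in the closed interval $[0,\pi/2]$ is precisely the point the paper itself highlights before stating the corollary.
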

The asymptotic of the absorption probabilities in the case of a non-zero
drift being obtained in~\cite{KR}, Corollary~\ref{main_theorem_absorption_probabilities} 
thus gives an example of the behavior of these probabilities in the case of a drift zero.

\medskip

In order to prove Theorem~\ref{Main_theorem_Green_functions}, we are going to develop
methods initiated in~\cite{FIM} and based on complex analysis, what will allow us to 
express \emph{explicitly} the Green functions~(\ref{def_Green_functions}). Indeed, 
in~\cite{FIM}, the authors G.~Fayolle, R.~Iasnogorodski and V.~Malyshev elaborate a 
profound and ingenious analytic approach for studying the 
stationary probabilities for random walks to the eight nearest neighbors in the quarter 
plane supposed ergodic,~\textit{i.e.}\ such that $p_{1,1}+p_{1,0}+p_{1,-1}<p_{-1,1}+
p_{-1,0}+p_{-1,-1}$ and $p_{1,1}+p_{0,1}+p_{-1,1}<p_{1,-1}+p_{0,-1}+p_{-1,-1}$.

We are going to see here that this analytical approach can be extended up to the case 
of the random walks in the quarter plane with drift zero and killed  at the boundary~: 
Section~\ref{h_x_z} of this paper first broadens the analysis begun in Part~6 of~\cite{FIM} 
for the drift zero, and then shows how this applies in the case of the random walks of 
$\mathscr{P}_{\alpha,\beta}$.

It is worth noting that this approach \textit{via} complex analysis is intrinsic 
to the dimension $d=2$~; for this reason, it seems really a difficult task to 
generalize it in higher~dimension.  

\medskip

Let us conclude this introductory part by describing the set $\mathscr{P}_{\alpha,\beta}$ 
defined in~(\ref{description_jump_probabilities}) in terms of the jump probabilities
$(p_{i,j})_{i,j}$.

\begin{rem}
\label{description_walks}
The fact that the two drifts are equal to zero gives two equations and the fact that
the sum of the jump probabilities is one yields an other one. Moreover, the harmonicity 
of $h(i_{0},j_{0})=i_{0}j_{0}(i_{0}+\alpha j_{0}+\beta)$, which reads $h(i_{0},j_{0})=
\sum_{i,j}p_{i,j}h(i_{0}+i,j_{0}+j)$, leads to ten equations, by identification of the 
coefficients of the third-degree polynomials above. It turns out that some of these 
equations are trivial and that some other ones are linearly dependent, we finally obtain 
six equations linearly independent. We can therefore express all the eight jump probabilities 
$(p_{i,j})_{i,j}$ in terms of $p_{1,1}$ and $p_{1,0}$ only, and we obtain~:

\vspace{1mm}

{\raggedright $*\ p_{-1,0}=-[\alpha(1-2\alpha-\beta)+8p_{1,1}+(4-3\alpha+
2\alpha^{2}+\alpha\beta)p_{1,0}]/[\alpha(1+2\alpha+\beta)]$,}

{\raggedright $*\ p_{-1,1}=[\alpha(1-\alpha-\beta)+2(4+3\alpha+2\alpha^{2}+\alpha\beta)
p_{1,1}+2(
2+\alpha^{2}+\alpha\beta)p_{1,0}]/[2\alpha(1+2\alpha+\beta)]$,}

{\raggedright $*\ p_{0,1}=-[-(1+\alpha+\beta)+
4(2+2\alpha+\beta)p_{1,1}+2(2+\alpha+\beta)p_{1,0}]/[2(1+2\alpha+\beta)]$,}

{\raggedright $*\ p_{1,-1}=[\alpha^{2}+(-1+
2\alpha-\beta)p_{1,1}-(1+\beta+2\alpha^{2})p_{1,0}]/[1+2\alpha+\beta]$,}

{\raggedright $*\ p_{0,-1}=-[(-1-3\alpha-\beta+4\alpha^{2})+4(-2+2\alpha-\beta)p_{1,1}+(-4+6
\alpha-2\beta-8\alpha^{2})p_{1,0}]/[2(1+2\alpha+\beta)]$,}

{\raggedright $*\ p_{-1,-1}=[\alpha(1-3\alpha-\beta+2\alpha^{2})+2(4-3\alpha+2\alpha^{2}-\alpha\beta)p_{1,1}+2(2-3\alpha
+3\alpha^{2}-2\alpha^{3})p_{1,0}
]/[2\alpha(1+2\alpha+\beta)]$.}
\end{rem}
The properties of $\mathscr{P}_{\alpha,\beta}$ mentioned below
(\ref{description_jump_probabilities}) are immediately obtained
by studying the sign of the jump probabilities above in 
terms of $\alpha$, $\beta$, $p_{1,1}$ and $p_{1,0}$.

\section{Explicit expression of the Green functions}
\label{h_x_z}

Section~\ref{h_x_z} aims at obtaining an explicit expression
of the Green functions~(\ref{def_Green_functions}) --~what we will
succeed in doing in Theorem~\ref{exp_Green_functions} below. This
forthcoming expression of the Green functions will be, in turn, the 
starting point of Section~\ref{Martin_boundary}, where we will find 
their asymptotic.

In order to prove Theorem~\ref{exp_Green_functions}, we need to state two
results, namely Equation
(\ref{functional_equation}) and  Proposition \ref{continuation_h_h_tilde_covering}~:
Equation (\ref{functional_equation}) is a functional equation between the generating 
function of the Green functions~(\ref{def_Green_functions}) and the ones of the 
absorption probabilities~(\ref{absorption_probabilities}), and Proposition
\ref{continuation_h_h_tilde_covering} establishes some quite important 
properties of the generating functions of the absorption probabilities.

The proof of Proposition~\ref{continuation_h_h_tilde_covering} turns out to require 
considering the Riemann surface defined by $\{(x,y)\in (\mathbb{C}\cup \{\infty\})^{2} 
: \sum_{i,j}p_{i,j}x^{i}y^{j}=1\}$, for this reason we begin Section~\ref{h_x_z} by 
studying --~and, in fact, by uniformizing~-- this surface. 

It seems of interest to us to introduce this Riemann surface in whole generality~;
this is why, at the beginning of Section~\ref{h_x_z}, we are going to 
suppose that the process belongs to $\mathscr{P}$ -- and not necessarily 
to $\mathscr{P}_{\alpha,\beta}$.

\medskip

To begin with, we define the generating functions of the Green functions
(\ref{def_Green_functions}) and of the absorption probabilities
(\ref{absorption_probabilities}) by
     \begin{equation}
     \label{def_generating_functions}
          G^{i_{0},j_{0}}(x,y) = \displaystyle \sum_{i,j\geq 1} G_{i,j}^{i_{0},j_{0}}x^{i-1} y^{j-1}, 
          \ \ h^{i_{0},j_{0}}\left(x\right)=\sum_{i\geq 1}h_{i}^{i_{0},j_{0}}x^{i},\
          \ \ \widetilde{h}^{i_{0},j_{0}}\left(y\right)=\sum_{j\geq 1}\widetilde{h}_{j}^{i_{0},j_{0}}y^{j}
     \end{equation}
and $h_{0,0}^{i_{0},j_{0}}=\mathbb{P}_{(i_{0} , j_{0})}[\exists k\geq 1 : (X(k),Y(k))=  (0,0)]$. 
Of course, $G^{i_{0},j_{0}}$, $h^{i_{0},j_{0}}$ and $\tilde{h}^{i_{0},j_{0}}$ are holomorphic 
in their unit disc. With these notations, we can state the following functional equation on $\{(x,y)\in 
\mathbb{C}^{2} : |x|<1, |y|<1 \}$~:
     \begin{equation}
     \label{functional_equation}
          Q\left(x,y\right)G^{i_{0},j_{0}}\left(x,y\right) =
          h^{i_{0},j_{0}}\left(x\right)+\widetilde{h}^{i_{0},j_{0}}
          \left( y \right)+h_{0,0}^{i_{0},j_{0}} -x^{i_{0}} y^{j_{0}},
     \end{equation}
where $Q(x,y)= x y \big[ \sum_{i,j}p_{i,j}x^{i}y^{j}  -1\big]$.
Equation~(\ref{functional_equation}) 
is obtained exactly as in Subsection~2.1 of \cite{KR}.


The polynomial $Q(x,y)$ defined above can obviously be written as 
     \begin{equation*}
          Q(x,y) = a(x) y^{2}+ b(x) y+ c(x) = 
          \widetilde{a}(y) x^{2}+\widetilde{b}(y) 
          x + \widetilde{c}(y), 
     \end{equation*}
with        
     \begin{equation*}
          a (x)  =p_{1,1}x^{2}+p_{0,1}x+p_{-1,1},\ b (x)  = p_{1,0}x^{2}-x+
          p_{-1,0},\ c (x)  = p_{1,-1}x^{2}+p_{0,-1}x+p_{-1,-1},
     \end{equation*}     
     \begin{equation*}
          \widetilde{a} (y)  =p_{1,1}y^{2}+p_{1,0}x+p_{1,-1},
          \ \widetilde{b} (y)  = p_{0,1}y^{2}-y+
          p_{0,-1},\ \widetilde{c} (y)  = p_{-1,1}y^{2}+p_{-1,0}y+p_{-1,-1}.
     \end{equation*}       

Let us also define the polynomials 
     \begin{equation*}
          d(x)=b(x)^{2}-4a(x)c(x),
          \ \ \ \ \ \widetilde{d}(y)=
          \widetilde{b}(y)^{2}-4\widetilde{a}(y)
          \widetilde{c}(y). 
     \end{equation*}

It is proved in Part~2.3 of~\cite{FIM} that for any random walk of
$\mathscr{P}$, $d$ (resp.\ $\tilde{d}$) has one simple root in $]-1,1[$, 
that we call $x_{1}$ (resp.\ $y_{1}$), a double root at $1$, and a simple 
root in $\mathbb{R}\cup\{\infty\}\setminus [-1,1]$, that we note $x_{4}$  
(resp.\ $y_{4}$).

For example, in the case of ${\rm SU}(3)$, \textit{i.e.}\ for the random walk 
with transition probabilities as in Picture~\ref{Lie}, we immediately obtain 
$x_{1}=0$, $y_{1}=1/4$, $x_{4}=4$ and $y_{4}=\infty$.

From a general point of view, it is shown in Part~2.3 of~\cite{FIM} that $x_{1}$ (resp.~$y_{1}$) 
is positive, zero or negative depending on whether ${p_{-1,0}}^{2}-4p_{-1,1}p_{-1,-1}$ 
(resp.\ ${p_{0,-1}}^{2}-4p_{1,-1}p_{-1,-1}$) is positive, zero or negative, and that 
$x_{4}$ (resp.\ $y_{4}$) is positive, infinite or negative depending on whether 
${p_{1,0}}^{2}-4p_{1,1}p_{1,-1}$ (resp.\ ${p_{0,1}}^{2}-4p_{1,1}p_{-1,1}$) is positive, 
zero or negative.

\medskip

Let us now have a look to the surface defined by $\{(x,y)\in (\mathbb{C}\cup 
\{\infty\})^{2} : Q(x,y)=0\}$, that we note $\mathscr{Q}$ for the sake of 
briefness. Note first that $Q(x,y)=0$ is equivalent to $[b(x)+2a(x)y]^{2}=d(x)$ 
or to $[\tilde{b}(y)+2\tilde{a}(y)x]^{2}=\tilde{d}(y)$. As a consequence, it follows 
from the particular form of $d$ or of $\tilde{d}$ (two distinct simple roots 
different from $1$ and one double root at $1$) that the surface $\mathscr{Q}$ 
has genus zero, and is thus homeomorphic to a sphere $\mathbb{C}\cup \{\infty\}$,
see \textit{e.g.}\ Parts 4.17 and 5.12 of \cite{JS}. Therefore, this Riemann surface can be rationally 
uniformized, in the sense that it is possible to find two rational functions $x(z)$ 
and $y(z)$, such that $\mathscr{Q}=\{(x(z),y(z)) : z\in \mathbb{C}\cup \{\infty\} \}$~;
moreover, a standard uniformization (for an account of the concept of uniformization, 
see Part 4.9 of \cite{JS}) is~:
     \begin{equation}
     \label{uniformization}
          x\left(z\right) = \frac{(z-z_{1})(z-1/z_{1})}
          {(z-z_{0})(z-1/z_{0})},\ \ \ \ \ 
          y\left(z\right) = \frac{(z-K z_{3})(z-K/z_{3})}
          {(z-K z_{2})(z-K/z_{2})},
     \end{equation}
where 
     \begin{eqnarray*}
          z_{0}&=&\big[2-(x_{1}+x_{4})+2[(1-x_{1})(1-x_{4})]^{1/2}\big]\big/\big[x_{4}-x_{1}\big],\\
          z_{1}&=&\big[x_{1}+x_{4}-2x_{1}x_{4}+2[x_{1}x_{4}(1-x_{1})(1-x_{4})]^{1/2}\big]\big/\big[x_{4}-x_{1}\big],\\
          z_{2}&=&\big[2-(y_{1}+y_{4})+2[(1-y_{1})(1-y_{4})]^{1/2}\big]\big/\big[y_{4}-y_{1}\big],\\
          z_{3}&=&\big[y_{1}+y_{4}-2y_{1}y_{4}+2[y_{1}y_{4}(1-y_{1})(1-y_{4})]^{1/2}\big]\big/\big[y_{4}-y_{1}\big],
     \end{eqnarray*}
and where $K$ is a complex number of modulus $1$. Note that $z_{0}$ and $z_{1}$ 
(resp.\ $z_{2}$ and $z_{3}$)~have a modulus equal to one or are real, according 
to the signs of $x_{1}$ and $x_{4}$ (resp.\ $y_{1}$ and $y_{4}$).

For example, in the case of ${\rm SU}(3)$, it follows from a direct calculation
that 
     \begin{equation*}
          z_{0}=\exp(-2\imath \pi/3),\ \ z_{1}=1,\ \ z_{2}=\exp(-\imath \pi/3), 
          \ \ z_{3}=\exp(\imath \pi/3),\ \ K=\exp(-\imath\pi/3).
     \end{equation*} 
Above and throughout the paper, we note $\imath$ the usual complex number 
verifying~$\imath^{2}=-1$.

In the general case, in order to find $K$, we need to introduce a group of automorphisms 
naturally associated with the surface $\mathscr{Q}$. To begin with, let us remark that, with 
the previous notations, $Q(x,y)=0$ entails $Q(x,[c(x)/a(x)]/y)=0$ and $Q([\tilde{c}(y)/
\tilde{a}(y)]/x,y)=0$~; it is therefore natural to consider the group generated by the 
two bilinear transfor-mations $\hat{\xi}(x,y)=(x,[c(x)/a(x)]/y)$ and $\hat{\eta}(x,y)=
([\tilde{c}(y)/\tilde{a}(y)]/x,y)$, which is called, in \cite{FIM}, the \emph{group of 
the random walk}.

These automorphisms $\hat{\xi}$ and $\hat{\eta}$ define two automorphisms $\xi$ and 
$\eta$ of the uniformization space $\mathbb{C}\cup\{\infty\}$, characterized by~:
     \begin{equation}
     \label{characterization_automorphisms}
          \xi\circ \xi = 1,\ \ 
          x\circ \xi = x,\ \ 
          y\circ \xi = [c(x)/a(x)]/y,\ \ 
          \eta\circ \eta = 1,\ \ 
          y\circ \eta = y,\ \ 
          x\circ \eta=[\widetilde{c}(y)/\widetilde{a}(y)]/x.
     \end{equation} 
With~(\ref{uniformization}) and~(\ref{characterization_automorphisms}),
we obtain that they are equal to~:
     \begin{equation}
     \label{def_automorphisms_C_z}
          \xi(z)  = 1/z,\ \ \ \ \ 
          \eta(z) = K^{2}/z.
     \end{equation}
In particular, it is immediate that the group $W=\langle\xi ,\eta \rangle$
generated by $\xi$ and $\eta$ is isomorphic to the dihedral group of 
order $2\inf\{n> 0 : K^{2n}=1\}$. For example, in the case of SU($3$)
for which $K=\exp(-\imath \pi/3)$, $W$ is of order six -- this fact is
(differently) proved in Part~4.1 of~\cite{FIM}.

A crucial fact is that this property is actually verified by \emph{any} random
walk of $\mathscr{P}_{\alpha,\beta}$, since we have the following.
  \begin{prop}
  \label{Prop_value_K}
       For any process of $\mathscr{P}_{\alpha,\beta}$, $K=\exp(-\imath\pi/3)$.
  \end{prop}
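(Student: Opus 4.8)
The plan is to determine $K$ explicitly from the uniformization \eqref{uniformization} together with the description of the jump probabilities in Remark~\ref{description_walks}. Since, by the discussion preceding the statement, $W=\langle\xi,\eta\rangle$ is the dihedral group of order $2\inf\{n>0:K^{2n}=1\}$ and $\eta\circ\xi:z\mapsto K^{2}z$ is a rotation, everything reduces to checking that $K^{2}=\exp(-2\imath\pi/3)$, i.e.\ that $K^{2}$ is a primitive cubic root of unity, the sign of its argument being fixed by the normalization chosen for $z_{0},z_{1},z_{2},z_{3}$.

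First I would turn this into an identity between the branch points. By \eqref{uniformization}, $z_{1}$ is a zero of $x(z)$, so the point of $\mathscr{Q}$ with parameter $z_{1}$ has first coordinate $0$; applying the automorphism $\widehat{\eta}(x,y)=([\widetilde{c}(y)/\widetilde{a}(y)]/x,y)$ sends it to the point with first coordinate $\infty$, that is, to a pole of $x(z)$, namely $z_{0}$ or $1/z_{0}$. As $\eta(z_{1})=K^{2}/z_{1}$, this forces $K^{2}\in\{z_{0}z_{1},\,z_{1}/z_{0}\}$, the correct alternative being the one compatible with $|K|=1$; the symmetric argument on the $y$-side gives $K^{2}\in\{1/(z_{2}z_{3}),\,z_{2}/z_{3}\}$, which will serve as a cross-check. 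It is convenient to rewrite the goal as $K^{2}+K^{-2}=2\cos(2\pi/3)=-1$. Writing $s=z_{0}+z_{0}^{-1}$ and $t=z_{1}+z_{1}^{-1}$, and using that the fixed points $\pm1$ of $\xi$ are sent by $x$ to the branch points, i.e.\ $\{x(1),x(-1)\}=\{x_{1},x_{4}\}$, so that $x_{1}+x_{4}=(8-2st)/(4-s^{2})$ and $x_{1}x_{4}=(4-t^{2})/(4-s^{2})$, a short manipulation shows that ``$K^{2}=z_{0}z_{1}$ and $K^{2}+K^{-2}=-1$'' reduces to the symmetric identity $s^{2}+st+t^{2}=3$.

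Next I would compute everything explicitly. From Remark~\ref{description_walks} each $p_{i,j}$ is a rational function of $(\alpha,\beta,p_{1,1},p_{1,0})$; substituting into $d(x)=b(x)^{2}-4a(x)c(x)$ and dividing out the known double root at $1$ gives $x_{1}+x_{4}$ and $x_{1}x_{4}$, hence $x_{1},x_{4}$, and likewise $y_{1},y_{4}$ from $\widetilde{d}$; from these and \eqref{uniformization} one gets $z_{0},z_{1}$ (and $z_{2},z_{3}$). Along the way one checks that the relevant discriminants have the signs making $z_{0},z_{1},z_{2},z_{3}$ of modulus one, as they must since $|K|=1$. Substituting into $s^{2}+st+t^{2}$ (or directly into $z_{0}z_{1}$) and simplifying, the parameters $p_{1,1},p_{1,0}$ cancel, and so do $\alpha,\beta$, leaving $s^{2}+st+t^{2}=3$, i.e.\ $K=\exp(-\imath\pi/3)$. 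The case of ${\rm SU}(3)$ ($x_{1}=0$, $x_{4}=4$, hence $z_{1}=1$, $z_{0}=\exp(-2\imath\pi/3)$, so $t=2$, $s=-1$, $s^{2}+st+t^{2}=1-2+4=3$), and more generally the one-parameter family $\mathscr{P}_{\alpha,0}$ of Picture~\ref{Part_Fami}, serve as checkpoints.

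The main obstacle is precisely this last simplification: the expressions of Remark~\ref{description_walks} are cumbersome, so one must organize a sizeable computation and a massive cancellation. The underlying reason, which also streamlines the algebra, is the following. Because the two drifts vanish, the degree-one part of $\sum_{i,j}p_{i,j}h(i_{0}+i,j_{0}+j)-h(i_{0},j_{0})$ must vanish identically when $h(i_{0},j_{0})=i_{0}j_{0}(i_{0}+\alpha j_{0}+\beta)$ is harmonic; this forces the second moments $\mu_{k\ell}=\sum_{i,j}i^{k}j^{\ell}p_{i,j}$ to satisfy $\mu_{20}:\mu_{11}:\mu_{02}=\alpha^{2}:(-\alpha/2):1$, so that a single step of any walk of $\mathscr{P}_{\alpha,\beta}$ has correlation $-1/2$, independently of $\alpha$ and $\beta$. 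In the geometry of \cite{FIM} this is exactly the situation where the two local branches of $\mathscr{Q}$ at the double point $(1,1)$ make the angle $\arccos(1/2)=\pi/3$, which is the case of a group of order six and corresponds to $K=\exp(-\imath\pi/3)$; carrying this ratio of moments into the formulas for $x_{1},x_{4},y_{1},y_{4}$ makes the radicals defining $z_{0},z_{1},z_{2},z_{3}$ collapse to expressions in sixth roots of unity and renders the cancellation transparent.
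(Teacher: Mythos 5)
Your main line of argument has a genuine gap at its very first structural step. You claim that $\widehat{\eta}$ sends the point of $\mathscr{Q}$ with parameter $z_{1}$ (where $x=0$) to a point with first coordinate $\infty$, and deduce $K^{2}\in\{z_{0}z_{1},z_{1}/z_{0}\}$. This is not correct: since $(0,y(z_{1}))$ lies on $\mathscr{Q}$ and $Q(0,y)=\widetilde{c}(y)$, the coordinate $y(z_{1})$ is necessarily a root of $\widetilde{c}$, so the formula $x\mapsto[\widetilde{c}(y)/\widetilde{a}(y)]/x$ reads $0/0$ at this point. The correct reading of $\widehat{\eta}$ is that, for fixed $y$, it exchanges the two roots in $x$ of $\widetilde{a}(y)x^{2}+\widetilde{b}(y)x+\widetilde{c}(y)=0$; when $\widetilde{c}(y)=0$ these roots are $0$ and $-\widetilde{b}(y)/\widetilde{a}(y)$, which is finite and non-zero in general. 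Hence $\eta(z_{1})$ is not a pole of $x$, and the relation $K^{2}=z_{0}z_{1}$ is left unproved --- yet your whole computation rests on it, since the identity $s^{2}+st+t^{2}=3$ involves only the $x$-data $z_{0},z_{1}$ and cannot by itself determine $K$, which depends on the $y$-side. (The identity $s^{2}+st+t^{2}=3$ itself is true and your brute-force plan would verify it, but it only says $z_{0}z_{1}^{\pm1}=\exp(\pm2\imath\pi/3)$.) The correct structural input, and the one the paper uses, is that $K$ is a fixed point of $\eta$, hence a ramification point of the degree-two map $y$, so that $y(K)=y_{1}$ and $x(K)^{2}=\widetilde{c}(y_{1})/\widetilde{a}(y_{1})$; from (\ref{uniformization}) one then extracts $K+1/K$ as an explicit weighted mean of $z_{0}+1/z_{0}$ and $z_{1}+1/z_{1}$ and concludes with Remark~\ref{description_walks}.

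By contrast, your closing paragraph contains the germ of a complete and correct proof, different from the paper's and arguably more illuminating. Harmonicity of $i_{0}j_{0}(i_{0}+\alpha j_{0}+\beta)$ together with zero drift does force $\mu_{20}:\mu_{11}:\mu_{02}=\alpha^{2}:(-\alpha/2):1$. Expanding $Q$ at the double point $(1,1)$, whose quadratic part is $\frac{1}{2}[\mu_{20}u^{2}+2\mu_{11}uv+\mu_{02}v^{2}]$, shows the two local branches have slopes $v/u=\alpha\exp(\pm\imath\pi/3)$; on the other hand the branch parametrized near $z=0$ has slope $y'(0)/x'(0)=\Omega_{y}/(K\Omega_{x})$ with $\Omega_{x},\Omega_{y}$ real and negative. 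Equating moduli gives $\Omega_{y}/\Omega_{x}=\alpha$ and equating arguments gives $K=\exp(\mp\imath\pi/3)$, the sign being fixed by the branch conventions in $z_{0},\dots,z_{3}$. If you promote that paragraph to the proof and drop the $z_{0}z_{1}$ detour, you obtain a valid argument which avoids the massive cancellation altogether.
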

\begin{proof}
With~(\ref{uniformization}), we have $y(K)=y_{1}$~; in addition,
by~(\ref{def_automorphisms_C_z}), $\eta(K)=K$, so that with
(\ref{characterization_automorphisms}), we obtain $x(K)^{2}
=\tilde{c}(y_{1})/\tilde{a}(y_{1})$. This implies that $x(K)=-
[\tilde{c}(y_{1})/\tilde{a}(y_{1})]^{1/2}$ -- indeed, we easily 
show that the roots of $Q(x,y_{1})$ have to be negative. By using 
again~(\ref{uniformization}), we get $K+1/K=\big[\tilde{a}(y_{1}
)^{1/2}(z_{1}+1/z_{1})+\tilde{c}(y_{1})^{1/2}(z_{0}+1/z_{0})\big]
\big/\big[\tilde{a}(y_{1})^{1/2}+\tilde{c}(y_{1})^{1/2}\big]$. In 
particular, $K+1/K$ can be expressed explicitly in terms of the 
jump probabilities $(p_{i,j})_{i,j}$. By using then Remark
\ref{description_walks} and after simplification, we get 
$K=\exp(-\imath\pi/3)$.
\end{proof}

\emph{From now on, we suppose that the process belongs to $\mathscr{P}_{\alpha,\beta}$. }
\medskip

For a better understanding of the surface $\mathscr{Q}$ as well as for
a coming use, we are now going to be interested in the transformations 
through the uniformization $(x,y)$ of some important cycles, namely 
the branch cuts $[x_{1},x_{4}]$, $[y_{1},y_{4}]$ and the unit circles 
$\{|x|=1\}$, $\{|y|=1\}$. First, by using~(\ref{uniformization}) and 
Proposition~\ref{Prop_value_K}, we immediately obtain~:
     \begin{equation}
     \label{eq_cycles}
          x^{-1}([x_{1},x_{4}])=\mathbb{R}\cup\{\infty\},\ \ \
          y^{-1}([y_{1},y_{4}])=\exp(-\imath\pi/3) \mathbb{R}\cup\{\infty\}.
     \end{equation}

As for the cycles $x^{-1}(\{|x|=1\})$ and $y^{-1}(\{|y|=1\})$,
their explicit expression~(calculated starting from~(\ref{uniformization}))
shows that they are real elliptic curves, which are located as in the middle of
Picture~\ref{transformation_cycles} below.

 \begin{figure}[!ht]
 \begin{picture}(0.00,130.00)
 \includegraphics{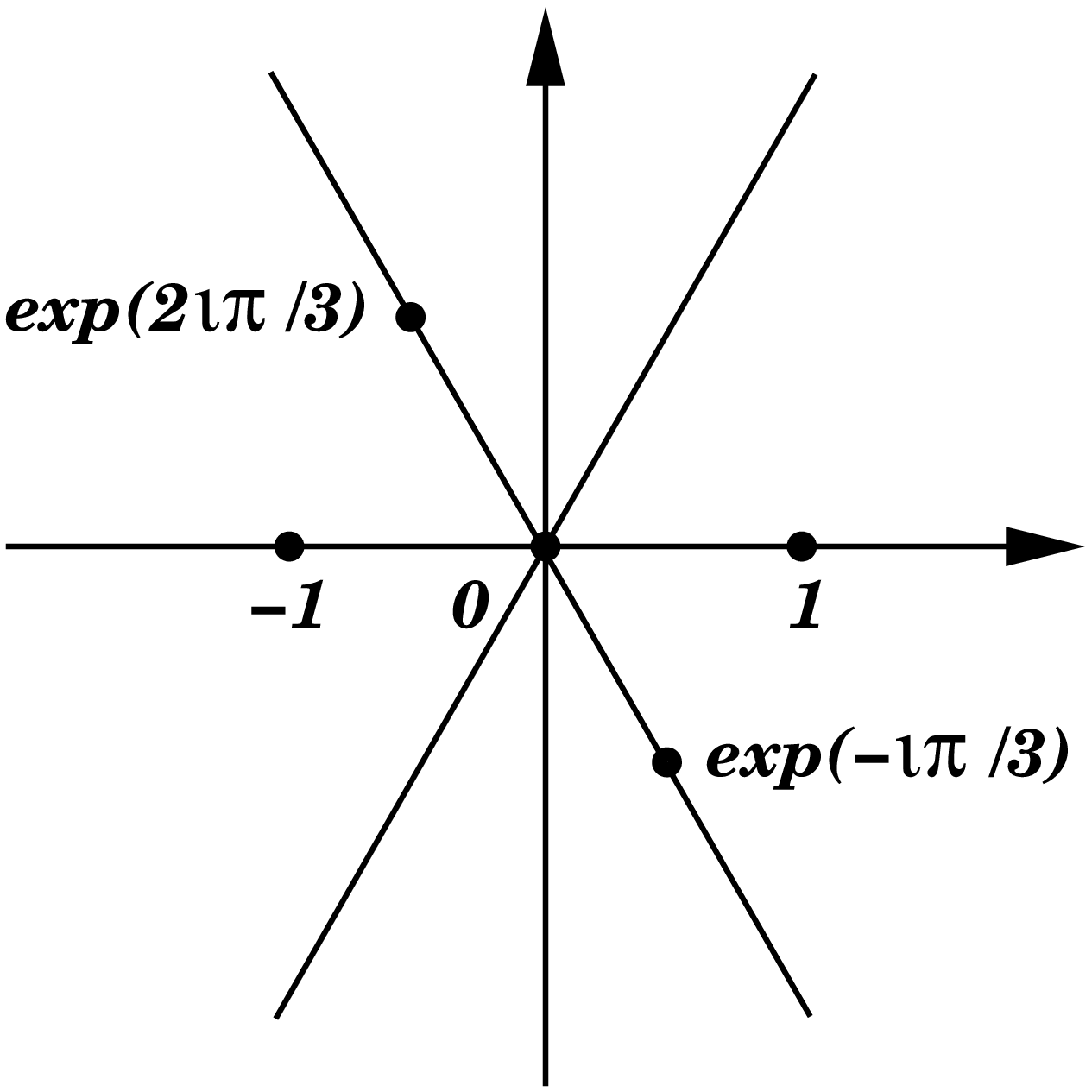}
 \hspace{49.5mm}
 \includegraphics{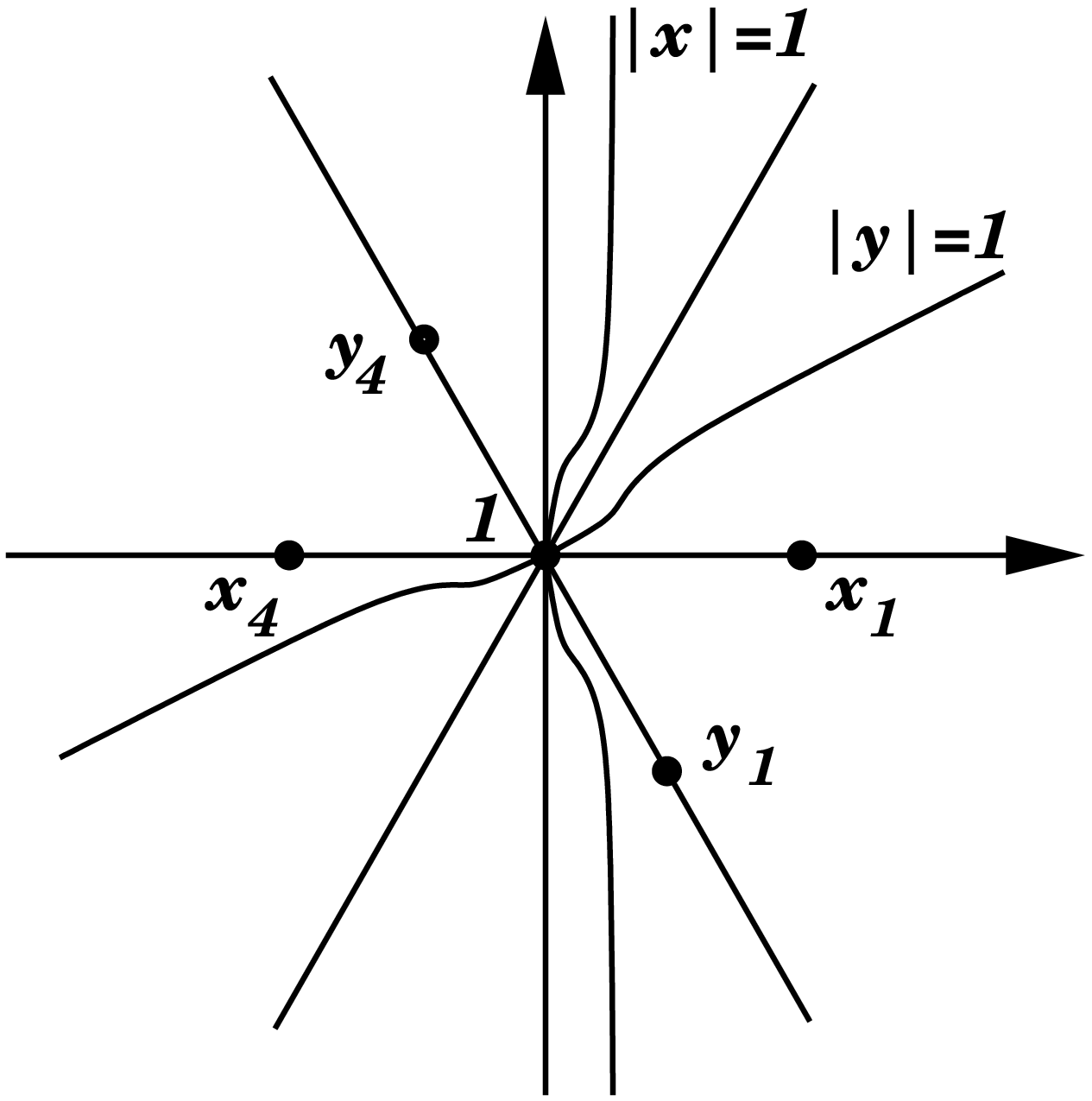}
 \hspace{49.5mm}
 \includegraphics{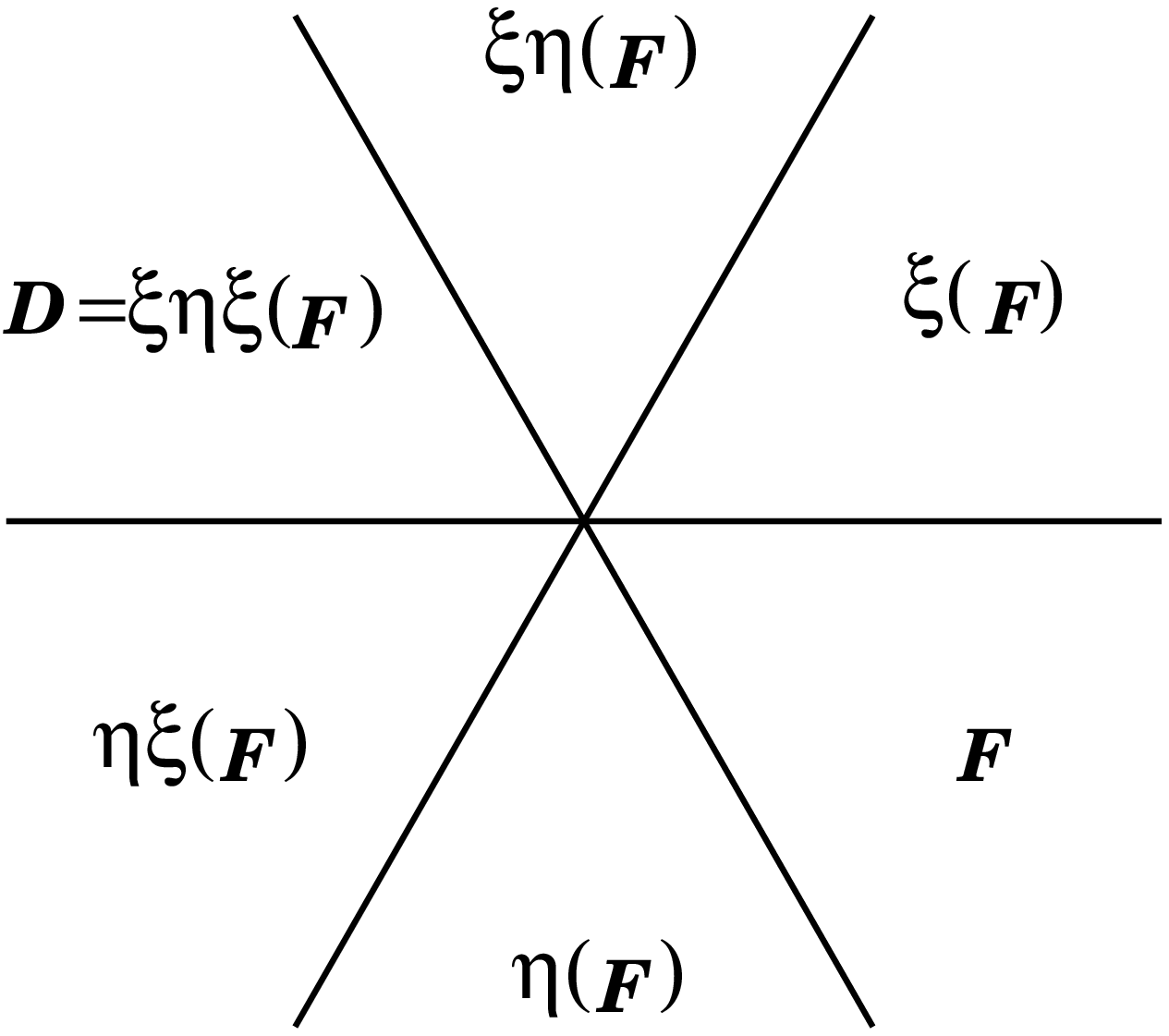}
 \end{picture}
 \caption{The uniformization space $\mathbb{C}\cup\{\infty\}$, with on the left some
          important elements of it, in the middle the corresponding elements
          through the uniformization $(x,y)$, and on the right the images of 
          the cone $F=\{x \exp(\imath \theta) : x\geq 0, -\pi/3\leq \theta\leq 0\}$ 
          through the six elements of the group $W=\langle \xi,\eta \rangle$}
 \label{transformation_cycles}
 \end{figure}

Note also that with~(\ref{def_automorphisms_C_z}) and Proposition~\ref{Prop_value_K},
we immediately obtain $\xi(\exp(\imath \theta)\mathbb{R}_{+})=\exp(-\imath \theta)
\mathbb{R}_{+}$ and $\eta(\exp(\imath \theta)\mathbb{R}_{+})=\exp(-\imath 
(\theta+2\pi/3))\mathbb{R}_{+}$. In particular, if we denote by $F$ the set $\{x \exp(\imath 
\theta) : x\geq 0, -\pi/3\leq \theta\leq 0\}$, we have --~see also on the right of 
Picture~\ref{transformation_cycles}~--
     \begin{equation}
     \label{fundamental_domain_first_step}
          \bigcup_{w\in W} w\left(F\right)=\mathbb{C}.
     \end{equation} 
 

Thanks to the group $W=\langle \xi,\eta\rangle$ and to
(\ref{fundamental_domain_first_step}), we are now going 
to continue the lifted functions $H^{i_{0},j_{0}}(z)=
h^{i_{0},j_{0}}(x(z))$ and $\tilde{H}^{i_{0},j_{0}}(z)
=\tilde{h}^{i_{0},j_{0}}(y(z))$~; this fact will turn 
out to be of the highest importance in the proof of 
Theorem~\ref{exp_Green_functions} -- the latter being 
crucial, since it will be the starting point of the forthcoming
Section~\ref{Martin_boundary}.

Note that in the sequel, we are often going to write $x^{i_{0}}y^{j_{0}}(z)$ instead of 
$x(z)^{i_{0}}y(z)^{j_{0}}$.

\begin{prop}
\label{continuation_h_h_tilde_covering}
The functions $H^{i_{0},j_{0}}(z)=h^{i_{0},j_{0}}(x(z))$ and 
$\tilde{H}^{i_{0},j_{0}}(z)=\tilde{h}^{i_{0},j_{0}}(y(z))$ 
can be meromorphically continued from respectively
$\{z\in \mathbb{C} : |x(z)|\leq 1\}$ and 
$\{z\in \mathbb{C} : |y(z)|\leq 1\}$ up to
respectively $\mathbb{C}\setminus \exp(\imath \pi)[0,\infty]$ and
$\mathbb{C}\setminus \exp(2\imath \pi/3)[0,\infty]$. 
These continuations verify
     \begin{equation}
     \label{stab_relations}
          H^{i_{0},j_{0}}\left(z\right)=
          H^{i_{0},j_{0}}\left(\xi\left(z\right)\right),\ \ \ \ \ 
          \widetilde{H}^{i_{0},j_{0}}\left(z\right)=
          \widetilde{H}^{i_{0},j_{0}}\left(\eta\left(z\right)\right)
     \end{equation}
for all $z\in \mathbb{C}$, and
     \begin{equation*}
          H^{i_{0},j_{0}}\left(z\right)+ \widetilde{H}^{i_{0},j_{0}}\left(z\right)
          +h_{0,0}^{i_{0},j_{0}}-x^{i_{0}}y^{j_{0}}\left(z\right)=\hspace{-0.5mm}
          \left\{\phantom{\begin{array}{ccccccc}
          \displaystyle 0 & \text{if} & z\in \mathbb{C}\setminus F_{4} \\
          \displaystyle  -\sum_{w\in W_{n}}\left(-1\right)^{l\left(w\right)}
          x^{i_{0}}y^{j_{0}}\left(w(z)\right) & \text{if} & z\in F_{4}.
          \end{array}}\right.\phantom{ii}\phantom{iii}
     \end{equation*}

\vspace{-21.5mm}

     \begin{subequations}
     \begin{eqnarray}
          \phantom{H^{i_{0},j_{0}}\left(z\right)+\widetilde{H}^{i_{0},j_{0}}\left(z\right)+
          x^{i_{0}}y^{j_{0}}\left(z\right) }&&\ \hspace{25mm} 0 \ \ 
          \hspace{25mm} \textit{if}\ \  z\in \mathbb{C}\setminus D
          \phantom{iiiiii} \label{eq_s_s_functions_1}\\
          \phantom{H\left(z\right)+\widetilde{H}\left(z\right)-
          x^{i_{0}}y^{j_{0}}\left(z\right)}&&\ \ \ \ \ -\sum_{w\in W}\left(-1\right)^{l\left(w\right)}
          x^{i_{0}}y^{j_{0}}\left(w(z)\right)\   \textit{if}\ \ z\in D
          \phantom{iiiiii} \label{eq_s_s_functions_2}
          \end{eqnarray}
\end{subequations}
where we have set $D=\{x\exp(\imath \theta) : x\geq 0, 2\pi/3\leq \theta\leq \pi\}$
and $l(w)$ for the length of $w$, \textit{i.e.}\ the smallest $r$ for which we can 
write $w=w_{1} \cdots  w_{r}$, with $w_{1},\ldots ,w_{r}$ equal to $\xi$ or $\eta$.
\end{prop}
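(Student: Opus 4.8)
The plan is to exploit the functional equation~(\ref{functional_equation}) on the uniformized surface, combined with the reflection symmetry encoded by the group $W=\langle\xi,\eta\rangle$, and to propagate the meromorphic continuation step by step across the fundamental domain $F$ using the covering relation~(\ref{fundamental_domain_first_step}). First I would lift~(\ref{functional_equation}) to $\mathscr{Q}$: since $Q(x(z),y(z))=0$ identically, the left-hand side vanishes on the whole surface, so on the region $\{|x(z)|<1,\ |y(z)|<1\}$ one has the key identity $H^{i_0,j_0}(z)+\widetilde H^{i_0,j_0}(z)+h_{0,0}^{i_0,j_0}-x^{i_0}y^{j_0}(z)=0$. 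Then I would record the ``small'' continuation: on $\{|x(z)|\leq 1\}$ the function $H^{i_0,j_0}$ is already well-defined (it is $h^{i_0,j_0}\circ x$ with $h^{i_0,j_0}$ holomorphic on the unit disc), and similarly $\widetilde H^{i_0,j_0}$ on $\{|y(z)|\leq 1\}$. The overlap $\{|x(z)|\leq 1\}\cap\{|y(z)|\leq 1\}$, which contains the cone $F$ by the description of the curves $x^{-1}(\{|x|=1\})$, $y^{-1}(\{|y|=1\})$ pictured in Picture~\ref{transformation_cycles}, is where the relation above holds; I would verify that this overlap is a full neighbourhood of $F$, so that the relation is an identity between meromorphic germs there.

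Next comes the actual continuation argument. On $\{|x(z)|\leq 1\}$ the relation gives $\widetilde H^{i_0,j_0}(z)=-H^{i_0,j_0}(z)-h_{0,0}^{i_0,j_0}+x^{i_0}y^{j_0}(z)$; the right-hand side is meromorphic on all of $\{|x(z)|\leq 1\}$, which is strictly larger than $\{|y(z)|\leq 1\}$, hence this continues $\widetilde H^{i_0,j_0}$ meromorphically to $\{|x(z)|\leq 1\}$. Symmetrically $H^{i_0,j_0}$ extends to $\{|y(z)|\leq 1\}$. Now I would use the invariance relations~(\ref{stab_relations}): since $x\circ\xi=x$ we may \emph{define} $H^{i_0,j_0}$ on $\xi(\{|y(z)|\leq 1\})$ by $H^{i_0,j_0}(z):=H^{i_0,j_0}(\xi(z))$, and this is consistent with the previous definition on the overlap because $\xi$ fixes the branch cut $x^{-1}([x_1,x_4])=\mathbb{R}\cup\{\infty\}$ and the two formulas for $\widetilde H^{i_0,j_0}$ agree there (one checks the gluing along $\mathbb{R}\cup\{\infty\}$, where $|x(z)|=1$ need not hold but the original relation does extend by continuity). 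Iterating: apply $\eta$ to push $\widetilde H^{i_0,j_0}$ further, then $\xi$ again, etc. Because $W$ has order six and $\bigcup_{w\in W}w(F)=\mathbb{C}$, after finitely many reflections we cover all of $\mathbb{C}$ except the two ``seams'' $\exp(\imath\pi)[0,\infty]$ and $\exp(2\imath\pi/3)[0,\infty]$ — these are precisely the rays where the successive domains fail to close up (the deleted ray for $H^{i_0,j_0}$ being $\xi\eta\xi\eta\xi$ applied to the boundary of the starting disc, a standard feature of this kind of argument, cf.\ Part~6 of~\cite{FIM} and Subsection~2.3 of~\cite{KR}).

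Finally I would assemble the explicit formula. After the continuation, in each translate $w(F)$ of the fundamental cone the sum $H^{i_0,j_0}(z)+\widetilde H^{i_0,j_0}(z)+h_{0,0}^{i_0,j_0}-x^{i_0}y^{j_0}(z)$ equals a telescoping alternating sum of the $x^{i_0}y^{j_0}(w(z))$: starting from $0$ on $F$, each time one crosses a wall and applies $\xi$ or $\eta$ one picks up, via~(\ref{stab_relations}) applied to $H^{i_0,j_0}$ or $\widetilde H^{i_0,j_0}$ respectively, one extra term $-(-1)^{l(w)}x^{i_0}y^{j_0}(w(z))$ (the sign tracking the parity $l(w)$ of the reflection word); summing over the group elements encountered gives~(\ref{eq_s_s_functions_1})--(\ref{eq_s_s_functions_2}) with $D$ the appropriate union of cones. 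The hard part will be the bookkeeping of the gluing: checking at each of the finitely many walls that the two available meromorphic definitions of $H^{i_0,j_0}$ (or $\widetilde H^{i_0,j_0}$) genuinely agree on the overlapping cone — this requires knowing the exact location of the elliptic curves $x^{-1}(\{|x|=1\})$, $y^{-1}(\{|y|=1\})$ relative to the rays $\exp(\imath k\pi/3)\mathbb{R}_+$, i.e.\ the geometry of Picture~\ref{transformation_cycles}, and verifying that the seam-rays $\exp(\imath\pi)[0,\infty]$ and $\exp(2\imath\pi/3)[0,\infty]$ are the only obstructions rather than spurious poles appearing earlier; everything else is the routine propagation along $W$.
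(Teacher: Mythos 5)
Your plan is correct and follows essentially the same route as the paper: the paper also builds the continuation piecewise over the six cones $w(F)$, $w\in W$, alternately using the reflections $\xi,\eta$ (via $x\circ\xi=x$, $y\circ\eta=y$) and the lifted functional equation to define each function on the next cone, and then obtains (\ref{eq_s_s_functions_2}) on the last cone $D=\xi\eta\xi(F)$ by telescoping all the piecewise definitions back to $H_1^{i_0,j_0}$, $\tilde H_1^{i_0,j_0}$. The only cosmetic difference is that you first push to $\{|x(z)|\le 1\}$ and $\{|y(z)|\le 1\}$ before reflecting, whereas the paper works cone by cone from the start; the gluing and sign bookkeeping you flag as the hard part are exactly what the paper's explicit cone-by-cone definitions carry out.
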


\begin{rem}
In $\{z\in\mathbb{C} : |x(z)|\leq 1,|y(z)|\leq 1\}\subset \mathbb{C}\setminus D$,
(\ref{eq_s_s_functions_1}) follows directly~from~(\ref{functional_equation}).
\end{rem}


\begin{proof}[Proof of Proposition~\ref{continuation_h_h_tilde_covering}] In order 
to prove Proposition~\ref{continuation_h_h_tilde_covering}, we are going to use 
strongly the decomposition (\ref{fundamental_domain_first_step})~: precisely, we 
are going to define $H^{i_{0},j_{0}}$ and $\tilde{H}^{i_{0},j_{0}}$ piecewise, by 
defining them on each of the six domains $w(F)$ that appear in the decomposition
(\ref{fundamental_domain_first_step}), to be equal to some functions $H^{i_{0},j_{0}}_{w}$ 
and $\tilde{H}^{i_{0},j_{0}}_{w}$. It will then be enough to show that the functions 
$H^{i_{0},j_{0}}$ and $\tilde{H}^{i_{0},j_{0}}$ so defined verify the conclusions 
of Proposition~\ref{continuation_h_h_tilde_covering}.

\medskip

$\bullet$ In $F=\{x \exp(\imath \theta) : x\geq 0, -\pi/3\leq \theta\leq 0\}
\subset \{z \in \mathbb{C} : |x(z)|\leq 1,|y(z)|\leq 1\}$, see Picture
\ref{Lie}, we are going to use the most natural way to define $H^{i_{0},j_{0}}$ 
and $\tilde{H}^{i_{0},j_{0}}$, \textit{i.e.}\ their power series. So we set, 
for $z\in F$, $H_{1}^{i_{0},j_{0}}(z)=h^{i_{0},j_{0}}(x(z))$ and 
$\tilde{H}_{1}^{i_{0},j_{0}}(z)=\tilde{h}^{i_{0},j_{0}}(y(z))$
--~the subscript $1$ standing for the identity element of the group 
$W=\langle \xi,\eta \rangle$.

\medskip

$\bullet$ Next, we define $H_{\xi}^{i_{0},j_{0}}$, $\tilde{H}_{\xi}^{i_{0},j_{0}}$ on 
$\xi(F)$ and $H_{\eta}^{i_{0},j_{0}}$, $\tilde{H}_{\eta}^{i_{0},j_{0}}$ on $\eta(F)$ by
     \begin{equation*}
          \left.\begin{array}{ccccccc}
          \displaystyle\forall z\in \xi\left(F\right) & : &
          \displaystyle H_{\xi}^{i_{0},j_{0}}\big(z\big)=H_{1}^{i_{0},j_{0}}\big(\xi(z)\big),
          & \displaystyle \widetilde{H}_{\xi}^{i_{0},j_{0}}\big(z\big)=
          -H_{\xi}^{i_{0},j_{0}}\big(z\big)-h_{0,0}^{i_{0},j_{0}}+x^{i_{0}}y^{j_{0}}\big(z\big),\\
          \displaystyle \forall z\in \eta\left(F\right) & : &
          \displaystyle \widetilde{H}_{\eta}^{i_{0},j_{0}}\big(z\big)=
          \widetilde{H}_{1}^{i_{0},j_{0}}\big(\eta(z)\big), & \displaystyle
          H_{\eta}^{i_{0},j_{0}}\big(z\big)=-\widetilde{H}_{\eta}^{i_{0},j_{0}}\big(z\big)
          -h_{0,0}^{i_{0},j_{0}}+x^{i_{0}}y^{j_{0}}\big(z\big).
          \end{array}\right.
     \end{equation*}

$\bullet$ Then, we define $H^{i_{0},j_{0}}_{\xi\eta}$, 
$\tilde{H}^{i_{0},j_{0}}_{\xi\eta}$ on 
$\xi\eta(F)$ and $H_{\eta\xi}^{i_{0},j_{0}}$, $\tilde{H}_{\eta\xi}^{i_{0},j_{0}}$ on $\eta\xi(F)$ by
     \begin{equation*}
          \left.\begin{array}{ccccccc}
          \displaystyle \forall z\in \xi\eta\left(F\right) & : &
          \displaystyle H_{\xi\eta}^{i_{0},j_{0}}\big(z\big)=H_{\eta}^{i_{0},j_{0}}\big(\xi(z)\big),
          & \displaystyle \widetilde{H}_{\xi\eta}^{i_{0},j_{0}}\big(z\big)=
          -H_{\xi\eta}^{i_{0},j_{0}}\big(z\big)-h_{0,0}^{i_{0},j_{0}}+x^{i_{0}}y^{j_{0}}\big(z\big),\\
          \displaystyle \forall z\in \eta\xi\left(F\right) & : & \displaystyle
          \widetilde{H}_{\eta\xi}^{i_{0},j_{0}}\big(z\big)=
          \widetilde{H}_{\xi}^{i_{0},j_{0}}\big(\eta(z)\big), &  \displaystyle
          H_{\eta\xi}^{i_{0},j_{0}}\big(z\big)=-\widetilde{H}_{\eta\xi}^{i_{0},j_{0}}\big(z\big)
          -h_{0,0}^{i_{0},j_{0}}+x^{i_{0}}y^{j_{0}}\big(z\big).
          \end{array}\right.
     \end{equation*}

$\bullet$ At last, we define $H^{i_{0},j_{0}}_{\xi\eta\xi}$ and 
$\tilde{H}^{i_{0},j_{0}}_{\xi\eta\xi}$ on $\xi\eta\xi(F)=\eta\xi\eta(F)$ by
     \begin{equation*}
          \forall z\in \xi\eta\xi\left(F\right) \ \ : \ \ 
          H^{i_{0},j_{0}}_{\xi\eta\xi}\big(z\big)=H^{i_{0},j_{0}}_{\eta\xi}
          \big(\xi(z)\big),\ \ \ 
          \tilde{H}^{i_{0},j_{0}}_{\xi\eta\xi}\big(z\big)=\tilde{H}^{i_{0},j_{0}}_{\xi\eta}
          \big(\eta(z)\big).
     \end{equation*}

Therefore we have, for each of the six domains $w(F)$ of
the decomposition~(\ref{fundamental_domain_first_step}),
defined two functions $H_{w}^{i_{0},j_{0}}$ and $\tilde{H}_{w}^{i_{0},j_{0}}$. Then,
as said at the beginning of the proof, we set
$H^{i_{0},j_{0}}(z)=H_{w}^{i_{0},j_{0}}(z)$ and $\tilde{H}^{i_{0},j_{0}}(z)=\tilde{H}_{w}^{i_{0},j_{0}}(z)$
for all $z\in w(F)$ and all $w\in W$.

With this construction,~(\ref{stab_relations}) and~(\ref{eq_s_s_functions_1})
are immediately obtained. To prove~(\ref{eq_s_s_functions_2}), we can use
the fact that it is possible to express \emph{all} the functions $H_{w}^{i_{0},j_{0}}$, 
$\tilde{H}_{w}^{i_{0},j_{0}}$ in terms \emph{only} of $H_{1}^{i_{0},j_{0}},\tilde{H}_{1}^{i_{0},j_{0}},
h_{0,0}^{i_{0},j_{0}},x^{i_{0}}y^{j_{0}}$~: we give \textit{e.g.}\ the expressions of
$H_{\xi\eta\xi}^{i_{0},j_{0}}$ and $\tilde{H}_{\xi\eta\xi}^{i_{0},j_{0}}$ on $\xi\eta\xi(F)$~:
     \begin{eqnarray*}
          H_{\xi\eta\xi}^{i_{0},j_{0}}\big(z\big)={H}_{1}^{i_{0},j_{0}}
          \big(\xi\eta\xi(z)\big)-
          x^{i_{0}}y^{i_{0}}\big(\eta\xi(z)\big)+
          x^{i_{0}}y^{i_{0}}\big(\xi(z)\big),\\
          \widetilde{H}^{i_{0},j_{0}}_{\xi\eta\xi}\big(z\big)=\widetilde{H}_{1}^{i_{0},j_{0}}
          \big(\xi\eta\xi(z)\big)-
          x^{i_{0}}y^{i_{0}}\big(\xi\eta(z)\big)+
          x^{i_{0}}y^{i_{0}}\big(\eta(z)\big).
     \end{eqnarray*}
We therefore obtain~(\ref{eq_s_s_functions_2}), since with~(\ref{eq_s_s_functions_1}) we get
$H_{1}^{i_{0},j_{0}}\big(\xi\eta\xi(z)\big)+\tilde{H}_{1}^{i_{0},j_{0}}\big(\xi\eta\xi(z)\big)
+h_{0,0}^{i_{0},j_{0}}=x^{i_{0}}y^{j_{0}}\big(\xi\eta\xi(z)\big)$ for $z\in\xi\eta\xi(F)$, and
since $W=\{1,\xi,\eta,\eta\xi,\xi\eta,\xi\eta\xi\}$.
\end{proof}

\begin{thm}
\label{exp_Green_functions}
For any $i,j,i_{0},j_{0}>0$, 
     \begin{equation}
     \label{final_formulation_G_i_j}
          G_{i,j}^{i_{0},j_{0}}=\frac{-[z_{0}-1/z_{0}]/\Omega_{x}}
          {2\pi\imath[{p_{1,0}}^{2}-4p_{1,1}p_{1,-1}]^{1/2}}
          \int_{\exp\left(\imath\theta \right)[0,\infty]}
          \left[\frac{1}{z}\sum_{w\in W}\left(-1\right)^{l(w)}
          x^{i_{0}}y^{j_{0}}\left(w\left(z\right)\right)\right]
          \frac{\textnormal{d}z}{x\left(z\right)^{i}
          y\left(z\right)^{j}},
     \end{equation}
where $\theta\in[2\pi/3,\pi]$ and where we have set $\Omega_{x}=z_{0}+1/z_{0}-[z_{1}+1/z_{1}]
=4(x_{4}-1)(x_{1}-1)/(x_{4}-x_{1})<0$.
\end{thm}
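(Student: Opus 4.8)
The plan is to extract $G_{i,j}^{i_0,j_0}$ from the generating function $G^{i_0,j_0}(x,y)$ by a double Cauchy integral, and then push the contour using the functional equation (\ref{functional_equation}) together with the analytic continuation obtained in Proposition~\ref{continuation_h_h_tilde_covering}. First I would write, for $|x|,|y|<1$,
\begin{equation*}
G_{i,j}^{i_0,j_0}=\frac{1}{(2\pi\imath)^2}\oint\oint G^{i_0,j_0}(x,y)\,\frac{\mathrm{d}x}{x^{i}}\,\frac{\mathrm{d}y}{y^{j}},
\end{equation*}
and substitute $G^{i_0,j_0}(x,y)=[h^{i_0,j_0}(x)+\widetilde h^{i_0,j_0}(y)+h_{0,0}^{i_0,j_0}-x^{i_0}y^{j_0}]/Q(x,y)$. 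The terms $h^{i_0,j_0}(x)$, $\widetilde h^{i_0,j_0}(y)$, $h_{0,0}^{i_0,j_0}$ and $x^{i_0}y^{j_0}$ are each a function of only one variable (or none), so for fixed $x$ one can compute the inner $y$-integral by residues: $Q(x,y)=a(x)(y-Y_-(x))(y-Y_+(x))$ has exactly one root $Y_-(x)$ inside the unit $y$-disc (for $|x|$ slightly less than $1$), and the residue there collapses the double integral to a single integral over a contour in the $x$-plane.

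The key step is then to transport this single integral onto the uniformization space $z\in\mathbb C\cup\{\infty\}$ via (\ref{uniformization}): with $x=x(z)$, $y=Y_-(x(z))=y(z)$ on the appropriate branch, the contour $\{|x|=1\}$ lifts (by (\ref{eq_cycles}) and the description of $x^{-1}(\{|x|=1\})$ in Picture~\ref{transformation_cycles}) to a curve in the $z$-plane homotopic to a ray $\exp(\imath\theta)[0,\infty]$ with $\theta\in[2\pi/3,\pi]$. The change of variables produces the Jacobian factor $x'(z)/x(z)^i\,y(z)^{-j}$; using $x(z)$ from (\ref{uniformization}) one computes $x'(z)$ explicitly, and the constants $z_0-1/z_0$, $\Omega_x=z_0+1/z_0-(z_1+1/z_1)$ and $[p_{1,0}^2-4p_{1,1}p_{1,-1}]^{1/2}=\pm(x_4-1)\cdot(\text{something})$ — recall $x_4$ is a root of $d$ and $p_{1,0}^2-4p_{1,1}p_{1,-1}$ is the leading coefficient of $d$ — assemble into the prefactor displayed in (\ref{final_formulation_G_i_j}). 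On this ray we are in the region $D=\{x\exp(\imath\theta):x\ge0,\,2\pi/3\le\theta\le\pi\}$, so Proposition~\ref{continuation_h_h_tilde_covering}, case (\ref{eq_s_s_functions_2}), tells us that the numerator $h^{i_0,j_0}(x(z))+\widetilde h^{i_0,j_0}(y(z))+h_{0,0}^{i_0,j_0}-x^{i_0}y^{j_0}(z)$ equals $-\sum_{w\in W}(-1)^{l(w)}x^{i_0}y^{j_0}(w(z))$. Substituting this identity into the integrand yields exactly (\ref{final_formulation_G_i_j}).

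The main obstacle I expect is bookkeeping of the contour deformation and the branch choices: one must justify that pushing $\{|x|=1\}$ out to the ray picks up no extra residues (the integrand, after using the functional equation, has its only relevant singularities accounted for, since $Q$ vanishes precisely on $\mathscr Q=\{(x(z),y(z))\}$ and the continuation of $H^{i_0,j_0}+\widetilde H^{i_0,j_0}$ is meromorphic off the cut rays by Proposition~\ref{continuation_h_h_tilde_covering}), that the integrals converge at $0$ and $\infty$ along the ray (controlled by the degrees of $x(z),y(z)$ in $z$ near $0$ and $\infty$, which forces the constraint $i,j\ge1$), and that the value $\theta\in[2\pi/3,\pi]$ of the ray is legitimate — any such $\theta$ works because the integrand has no singularity in that angular sector, again by Proposition~\ref{continuation_h_h_tilde_covering}. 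Once these analyticity and convergence points are checked, the formula follows; the remaining computation of the explicit constant is the routine identification described above.
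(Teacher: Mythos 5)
Your proposal is correct and follows essentially the same route as the paper: Cauchy's double integral for the coefficients, substitution of the functional equation, collapse to a single integral via the residue in $y$ (the paper's $\partial_{y}Q(x(z),y(z))=d(x(z))^{1/2}$ factor), lifting to the uniformization space and deforming the contour to the ray $\exp(\imath\theta)[0,\infty]$ with $\theta\in[2\pi/3,\pi]$ (legitimate precisely because the singular rays of $H^{i_{0},j_{0}}$ and $\widetilde{H}^{i_{0},j_{0}}$ are $\exp(\imath\pi)[0,\infty]$ and $\exp(2\imath\pi/3)[0,\infty]$), and finally substituting (\ref{eq_s_s_functions_2}) on $D$. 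The remaining items you flag (the explicit Jacobian constant and the contour bookkeeping) are exactly the computations the paper carries out.
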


\begin{proof}
We have already noticed that the generating function of the Green functions $G^{i_{0},j_{0}}$, 
defined in~(\ref{def_generating_functions}), is holomorphic in $\{(x,y)\in \mathbb{C}^{2} : 
|x|<1, |y|<1\}$. As a consequence and by using~(\ref{functional_equation}), the Cauchy formulas 
allow us to write its coefficients $G_{i,j}^{i_{0},j_{0}}$ as the following double integrals~:
     \begin{equation*}
     \label{application_Cauchy_formula}
          G_{i,j}^{i_{0},j_{0}}= 
          \frac{1}{\left[2\pi \imath \right]^{2}}
          \iint_{\substack{\{\left|x\right|=\left|y\right|=1\}}}
          \frac{h^{i_{0},j_{0}}\left(x\right)+\widetilde{h}^{i_{0},j_{0}}
          \left(y\right)+h_{0,0}^{i_{0},j_{0}}-x^{i_{0}}y^{j_{0}}}
          {Q\left(x,y\right)x^{i}y^{j}}\text{d}x\text{d}y.
     \end{equation*}
Then, by using the uniformization~(\ref{uniformization}), the location of the 
cycles $\{|x|=1\}$ and $\{|y|=1\}$, see Picture~\ref{transformation_cycles}, 
the residue theorem at infinity and the Cauchy theorem, we obtain that~:
     \begin{equation}
     \label{final_formulation_G_i_j_0}
          G_{i,j}^{i_{0},j_{0}}=\frac{1}{2\pi \imath}
          \int_{\exp\left(\imath\theta \right)[0,\infty]}
          \frac{H^{i_{0},j_{0}}\left(z\right)+\widetilde{H}^{i_{0},j_{0}}\left(z\right)
          +h_{0,0}^{i_{0},j_{0}}-x^{i_{0}}y^{j_{0}}\left(z\right)}
          {\big[\partial_{y}Q(x(z),y(z))\big]x(z)^{i}y(z)^{j}}
          x'\left(z\right)\text{d}z,
     \end{equation}
$\theta$ being any angle in $[2\pi/3,\pi]$ -- $[2\pi/3,\pi]$ because on the one hand, 
it is not possible to take $\theta>\pi$, since $\exp(\imath \pi)[0,\infty]$ is a 
singular curve for $H^{i_{0},j_{0}}$, and on the other hand, it is not allowed to 
have $\theta<2\pi/3$, since $\exp(2\imath \pi/3)[0,\infty]$ is a singular curve for 
$\tilde{H}^{i_{0},j_{0}}$, see Proposition~\ref{continuation_h_h_tilde_covering}.

Then, from~(\ref{uniformization}) and from the fact that
$\partial_{y}Q(x(z),y(z))=d(x(z))^{1/2}$, we remark that 
we have 
     \begin{equation*}
          x'(z)/\partial_{y}Q(x(z),y(z))=\frac{[z_{0}-1/z_{0}]/
          (z_{0}+1/z_{0}-[z_{1}+1/z_{1}])}{[{p_{1,0}}^{2}-4p_{1,1}p_{1,-1}]^{1/2}z}.
     \end{equation*}
In this way and by using~(\ref{eq_s_s_functions_2}) 
in~(\ref{final_formulation_G_i_j_0}), we get (\ref{final_formulation_G_i_j}).
\end{proof}

\section{Proof of Theorem~1~: asymptotic of the Green functions}
\label{Martin_boundary}

\paragraph{Beginning of the proof of Theorem~\ref{Main_theorem_Green_functions}.}

For any $\theta\in[2\pi/3,\pi]$, the function $x(z)^{i}y(z)^{j}$ is, 
on $\exp(\imath\theta)[0,\infty]$, larger than $1$ in modulus, see 
Picture~\ref{transformation_cycles}. Moreover, it goes to $1$ when 
(and only when) $z$ goes to $0$ or to $\infty$. This is why it seems 
natural to decompose the contour $\exp(\imath\theta)[0,\infty]$ into 
a part near $0$, an other near $\infty$ and the remaining part, and 
to think that the parts near $0$ and $\infty$ will lead to the 
asymptotic of $G_{i,j}^{i_{0},j_{0}}$, and that the remaining part 
will lead to a negligible contribution. In this way appears the 
question of finding the best possible contour in order to achieve 
this idea~; in other words, it is a matter of finding the value of 
$\theta$ for which the calculation of the asymptotic of~(\ref{final_formulation_G_i_j}) 
on $\exp(\imath\theta)[0,\infty]$ will be the easiest, among all 
the possibilities $\theta\in[2\pi/3,\pi]$.

\medskip

For this, we are going to consider with details the function $x(z)^{i}y(z)^{j}$,
or, equivalently, the function $\chi_{j/i}(z)=\ln(x(z))+(j/i)\ln(y(z))$.
Incidentally, this is why, from now on, we suppose that $j/i\in[0,M]$, for some
$M<\infty$. Indeed, the function $\chi_{j/i}$ is manifestly not appropriate
to the values $j/i$ going to $\infty$, for such $j/i$, we will consider later
the function $(i/j)\chi_{j/i}(z)=(i/j)\ln(x(z))+\ln(y(z))$. Nevertheless,
$M$ can be so large as wished, and, in what follows, we assume that some 
$M>0$ is fixed.

Now we set $\chi_{j/i}(z)=\sum_{p\geq 0}\nu_{p}(j/i)z^{p}$, this function is
\textit{a priori} well defined for $z$ in a neighborhood of $0$. Moreover, with
(\ref{uniformization}), we obtain that $\nu_{0}(j/i)=0$ and that for all $p\geq 1$,
     \begin{equation}
     \label{expansion_chi_0}
          p\nu_{p}(j/i)=\big(z_{0}^{p}+1/z_{0}^{p}-[z_{1}^{p}+1/z_{1}^{p}]\big)
          +(j/i)\big(z_{2}^{p}+1/z_{2}^{p}-[z_{3}^{p}+1/z_{3}^{p}]\big)/K^{p}.   
     \end{equation}

Likewise, we easily prove, by using~(\ref{uniformization}), that for $z$ 
near $\infty$, $\chi_{j/i}(z)=\sum_{p\geq 0}\overline{\nu_{p}}(j/i)z^{-p}$.

Consider now the steepest descent path associated with $\chi_{j/i}$, that is 
the function $z_{j/i}(t)$ defined by $\chi_{j/i}(z_{j/i}(t))=t$. By inverting 
the latter equality, we immediately obtain that the half-line $(1/\nu_{1}(j/i))
[0,\infty]$ is tangent at $0$ and at $\infty$ to the steepest descent path. 

Now we set, for the sake of briefness, $\rho_{j/i}=1/\nu_{1}(j/i)$.
With this notation, let us now answer the question asked above, that 
dealt with finding the value of $\theta$ for which the asymptotic of 
$G_{i,j}^{i_{0},j_{0}}$ will be the most easily calculated~: we are 
going to choose $\theta=\arg(\rho_{j/i})\in[2\pi/3,\pi]$, and the 
decomposition of the contour $\exp(\imath \theta)[0,\infty]$ will be 
     \begin{equation*}
          \exp(\imath \theta)[0,\infty]=
          \big(\rho_{j/i}/|\rho_{j/i}|\big)
          \big([0,\epsilon]\cup]\epsilon,1/\epsilon[
          \cup [1/\epsilon,\infty]\big).
     \end{equation*}

By using this decomposition in~(\ref{final_formulation_G_i_j}), 
we consider now that the Green functions are the sum of three 
terms, and we are going to study successively the contribution 
of each of these terms.

But first of all, we simplify the expression of $\rho_{j/i}$.
Setting $\Omega_{y}=z_{2}+1/z_{2}-[z_{3}+1/z_{3}]=4(y_{4}-1)
(y_{1}-1)/(y_{4}-y_{1})$ and using~(\ref{expansion_chi_0}), we 
immediately obtain that~$\nu_{1}(j/i)=\Omega_{x}+(j/i)
\Omega_{y}/K$. But it turns out that for all the walks of 
$\mathscr{P}_{\alpha,\beta}$, we have $\Omega_{y}=\alpha\Omega_{x}$ 
--~this follows from a direct calculation starting from the explicit 
expression of the branch points $x_{1},x_{4},y_{1},y_{4}$ in terms 
of the jump probabilities $(p_{i,j})_{i,j}$ and by using Remark
\ref{description_walks}. Therefore we have~:
     \begin{equation}
     \label{def_rho_j/i}
          \rho_{j/i}=\frac{1}{\nu_{1}\left(j/i\right)}=\frac{1}{\Omega_{x}}
          \frac{1}{1+\left(j/i\right)\alpha\exp\left(\imath\pi/3\right)}.
     \end{equation}

\paragraph{Contribution of the neighborhood of $0$.}
In order to evaluate the asymptotic of the integral~(\ref{final_formulation_G_i_j}) on 
the contour $(\rho_{j/i}/|\rho_{j/i}|)[0,\epsilon]$, we are going to use the expansion 
of the function $(1/z)\sum_{w\in W}(-1)^{l(w)}x^{i_{0}}y^{j_{0}}(w(z))$ at $0$ -- expansion
that we will obtain in Equation (\ref{expansion_s_s}) below. This is why we begin by studying 
the asymptotic of the following integral, for any non-negative integer~$k$~:
     \begin{equation}
     \label{integral_general_p}
          \int_{\left(\rho_{j/i}/|\rho_{j/i}|\right)[0,\epsilon]}
          \frac{z^{k}}{x\left(z\right)^{i}
          y\left(z\right)^{j}}\text{d}z.
     \end{equation}

By using the equality $1/[x(z)^{i}y(z)^{j}]=\exp(-i\chi_{j/i}(z))$ as well as the 
expansion~(\ref{expansion_chi_0}) of $\chi_{j/i}$ at $0$ and then making the 
change of variable $z=\rho_{j/i} t$, we obtain that~(\ref{integral_general_p}) is 
equal to
     \begin{equation}
     \label{integral_after_expansion}
          \rho_{j/i}^{k+1}\int_{0}^{\epsilon/|\rho_{j/i}|}
          t^{k}\exp\left(-i t\right)\exp\Big(-i \nu_{2}\left(j/i\right)
          (\rho_{j/i}t)^{2}\Big)
          \exp\bigg(-i\sum_{p\geq 3}\nu_{p}\left(j/i\right)
          (\rho_{j/i}t)^{p}\bigg)
          \text{d}t.
     \end{equation}

Now we set $m=\max\{|z_{0}|,1/|z_{0}|,|z_{1}|,1/|z_{1}|,|z_{2}|,1/|z_{2}|,|z_{3}|,1/|z_{3}|\}$. 
Then, with~(\ref{expansion_chi_0}), we get $|\nu_{p}(j/i)|\leq 4m^{p}(1+M)$. Thus, for all $t\in
[0,\epsilon/|\rho_{j/i}|]$, $|-i\sum_{p\geq 3}\nu_{p}(j/i)(\rho_{j/i}t)^{p}|\leq 4
(1+M)i (m\epsilon)^{3}/(1-m\epsilon)$. This is why $\exp(-i\sum_{p\geq 3}\nu_{p}(j/i)
(\rho_{j/i}t)^{p})=1+O(i\epsilon^{3})$, the $O$ being independent of $j/i\in[0,M]$ and of 
$t\in[0,\epsilon/|\rho_{j/i}|]$. The integral~(\ref{integral_after_expansion}) can thus be 
calculated as
     \begin{equation*}
          (\rho_{j/i}/i)^{k+1}
          \big[1+O(i\epsilon^{3})\big]
          \int_{0}^{i\epsilon/|\rho_{j/i}|}t^{k}\exp\left(-t\right)
          \big[1-\nu_{2}\left(j/i\right)\rho_{j/i}^{2}t^{2}/i+
          O(t^{4}/i^{2})\big]\text{d}t.
     \end{equation*}

Let us now choose $\epsilon$ such that $i\epsilon/|\rho_{j/i}|\to \infty$ and 
$O(i\epsilon^{3})=o(1/i)$, \textit{e.g.}\ $\epsilon=1/i^{3/4}$.
For this choice of $\epsilon$, we obtain that the integral~(\ref{integral_general_p}) 
is equal to
     \begin{equation}
     \label{conclusion_integral_general_p}
          (\rho_{j/i}/i)^{k+1}\big[1+o(1/i)\big]
          \big[k!-\nu_{2}\left(j/i\right)
          \rho_{j/i}^{2}\left(k+2\right)!/i+O(1/i^{2})\big],
     \end{equation}
where the $o$ and $O$ above are independent of $j/i\in[0,M]$.

\medskip

We are presently ready to find the asymptotic of
the integral~(\ref{final_formulation_G_i_j})
on the contour $(\rho_{j/i}/|\rho_{j/i}|)[0,\epsilon]$.
To begin with, we have the following expansion in the neighborhood of $0$ 
(directly obtained from~(\ref{uniformization}),~(\ref{def_automorphisms_C_z})
and Remark~\ref{description_walks})~:
     \begin{equation}
     \label{expansion_s_s}
          \sum_{w\in W}\left(-1\right)^{l\left(w\right)}
          x^{i_{0}}y^{j_{0}}\left(w(z)\right)=
          (\imath 3^{3/2}/2) \alpha \Omega_{x}^{3} i_{0}j_{0}\left(
          i_{0}+\alpha j_{0}+\beta\right)z^{3}+O\left(z^{6}\right).
     \end{equation}

Equation~(\ref{expansion_s_s}) implies then that the integral~(\ref{final_formulation_G_i_j})
on the contour $(\rho_{j/i}/|\rho_{j/i}|)[0,\epsilon]$ equals
     \begin{equation*}
          \frac{-[z_{0}-1/z_{0}]/\Omega_{x}}
          {2\pi\imath[{p_{1,0}}^{2}-4p_{1,1}p_{1,-1}]^{1/2}}
          \int_{\left(\rho_{j/i}/|\rho_{j/i}|\right)[0,\epsilon]}
          \frac{ (\imath 3^{3/2}/2) \alpha \Omega_{x}^{3} i_{0}j_{0}\left(
          i_{0}+\alpha j_{0}+\beta\right)z^{2}+O(z^{5})}{x\left(z\right)^{i}
          y\left(z\right)^{j}}\text{d}z.
     \end{equation*}

So, with~(\ref{integral_general_p}) and~(\ref{conclusion_integral_general_p})
applied for $k=2$ and $k=5$, we obtain that 
the integral~(\ref{final_formulation_G_i_j})
on the contour $(\rho_{j/i}/|\rho_{j/i}|)[0,\epsilon]$ is equal to
     \begin{equation}
     \label{conclusion_contribution_neighborhood_0}
          \frac{-[z_{0}-1/z_{0}]3^{3/2}\alpha\Omega_{x}^{2}}
          {4\pi[{p_{1,0}}^{2}-4p_{1,1}p_{1,-1}]^{1/2}}i_{0}j_{0}
          \left(i_{0}+\alpha j_{0}+\beta\right)(\rho_{j/i}/i)^{3}
          \big[2-24\nu_{2}\left(j/i\right)
          \rho_{j/i}^{2}/i+o(1/i)\big].
     \end{equation}

\paragraph{Contribution of the neighborhood of $\infty$.}

The part of the contour close to $\infty$,
$(\rho_{j/i}/|\rho_{j/i}|)[1/\epsilon,\infty]$,
is related to the part $(\rho_{j/i}/|\rho_{j/i}|)[0,\epsilon]$
by the transformation $z\mapsto 1/\overline{z}$. Moreover,
it is immediate from~(\ref{uniformization}) that for
$f=x$, $f=y$, or $f=\sum_{w\in W}(-1)^{l(w)}x^{i_{0}}y^{j_{0}}(w)$,
$f(1/\overline{z})=\overline{f(z)}.$
Therefore, the change of variable $z\mapsto 1/\overline{z}$
immediately entails that the contribution of the
integral~(\ref{final_formulation_G_i_j}) near $\infty$ is the
complex conjugate of its contribution near~$0$.

\paragraph{Contribution of the intermediate part.} We first recall from 
Proposition~\ref{continuation_h_h_tilde_covering} that $D$ denotes 
$\{x\exp(\imath \theta) : x\geq 0, 2\pi/3\leq \theta \leq \pi\}$,
and we define $A_{\epsilon}=\{z\in \mathbb{C} : \epsilon \leq 
|z|\leq 1/\epsilon\}$. Clearly (see Picture~\ref{transformation_cycles})
there exist $\eta_{x,\epsilon}>0$ and $\eta_{y,\epsilon}>0$ such that 
for all $z\in D\cap A_{\epsilon}$, $|x(z)|\geq 1+\eta_{x,\epsilon}$ and 
$|y(z)|\geq 1+\eta_{y,\epsilon}$. In fact, since $x'(0)=\Omega_{x}\neq 0$
and $y'(0)=\Omega_{y}/K\neq 0$, it is possible to take $\eta_{x,\epsilon}
\geq \eta \epsilon$ and $\eta_{y,\epsilon}\geq \eta \epsilon$, for some 
$\eta>0$ independent of $\epsilon$ small enough.

Let us now consider the function 
     \begin{equation*}
          s(z)=\frac{1}{x^{i_{0}}y^{j_{0}}(z)}
          \left[\sum_{w\in W}(-1)^{l(w)}
          x^{i_{0}}y^{j_{0}}(w(z))\right],
     \end{equation*} 
and let us show that $\sup_{z\in D}|s(z)|$ is finite. 
For this, it is sufficient to prove that $s$ has no pole in the
closed domain $D\cup\{\infty\}$. 

By~(\ref{uniformization}), the only zeros of the denominator of $s$ are at $z_{1},
1/z_{1},K z_{3},K/z_{3}$ which, as we easily check, belong to $-(D\cup \overline{D})$. 
Also, by~(\ref{uniformization}) and~(\ref{def_automorphisms_C_z}), the only poles of 
the numerator of $s$ are at $K^{2k}z_{0},K^{2k}/z_{0},K^{2k+1}z_{2},K^{2k+1}/z_{2}$, 
for $k\in\{0,1,2\}$. Next, we verify that both $z_{0}$ and $K z_{2}$ belong to $D$, so that 
among the twelve previous points, in fact only $z_{0}$ and $K z_{2}$ are in $D$. But in 
the definition of $s$, we took care of 
dividing by $x^{i_{0}}y^{j_{0}}$, so that $s$ is in fact holomorphic near these two
points. Moreover, $s$ is clearly holomorphic at $\infty$. Finally, 
we have proved that the meromorphic function $s$ has no pole in the closed domain 
$D\cup\{\infty\}$, hence $s$ is bounded in $D\cup\{\infty\}$, in other words 
$\sup_{z\in D}|s(z)|$ is finite.

In particular, the modulus of the contribution of the integral~(\ref{final_formulation_G_i_j})
on the intermediate part $(\rho_{j/i}/|\rho_{j/i}|)]\epsilon,1/\epsilon[
\subset D\cap A_{\epsilon}$ can be bounded from above by
     \begin{equation}
     \label{upper_bound_intermediate_term}
          \frac{|z_{0}-1/z_{0}|/|\Omega_{x}|}{2\pi |{p_{1,0}}^{2}
          -4p_{1,1}p_{1,-1}|^{1/2}} \frac{1}{\epsilon^{2}}
          \frac{\sup_{z\in D}\left|s\left(z\right)\right|}
          {(1+\eta\epsilon)^{i-i_{0}}
          (1+\eta\epsilon)^{j-j_{0}}}.
     \end{equation}
Note that the presence of the term $1/\epsilon^{2}$ in~(\ref{upper_bound_intermediate_term}) 
is due to the following~: one $1/\epsilon$ appears as an upper bound of the length of the 
contour, the other $1/\epsilon$ comes from an upper bound of the modulus of the term $1/z$ 
present in the integrand of~(\ref{final_formulation_G_i_j}).

Then, as before, we take $\epsilon=1/i^{3/4}$, and we use
the following straightforward upper bound, valid for $i$ large enough~: 
$1/(1+\eta/i^{3/4})^{i}\leq \exp(-(\eta/2) i^{1/4})$.
We finally obtain that for $i$ large 
enough,~(\ref{upper_bound_intermediate_term})
is equal to $O(i^{3/2}\exp(-(\eta/2) i^{1/4}))$.
We are going to see soon that this contribution is negligible w.r.t.\ the sum of the 
contributions of the integral~(\ref{final_formulation_G_i_j}) 
in the neighborhoods of $0$ and $\infty$.

\paragraph{Conclusion.}
We have shown that the contribution of the integral~(\ref{final_formulation_G_i_j})
in the~neighborhood of $0$ is given by~(\ref{conclusion_contribution_neighborhood_0}),
that the contribution of~(\ref{final_formulation_G_i_j}) in the neighborhood of 
$\infty$ is equal to the complex
conjugate of~(\ref{conclusion_contribution_neighborhood_0}), and that
the contribution of the remaining part equals
$O(i^{3/2}\exp(-(\eta/2) i^{1/4}))$. 
Moreover, starting from~(\ref{def_rho_j/i}), we immediately get that
$(\rho_{j/i}/i)^{3}-(\overline{\rho_{j/i}}/i)^{3}=
-\imath 3^{3/2}\alpha i j (i+\alpha j)/[\Omega_{x}(i^{2}+\alpha i j +\alpha^{2}j^{2})]^{3}$. 
In this way, we obtain
     \begin{eqnarray}
     \label{btc}
          G_{i,j}^{i_{0},j_{0}}=\frac{-[z_{0}-1/z_{0}]3^{3/2}\alpha\Omega_{x}^{2}}
          {4\pi({p_{1,0}}^{2}-4p_{1,1}p_{1,-1})^{1/2}}
          i_{0}j_{0}(i_{0}+\alpha j_{0}+\beta)\times \phantom{hhhhhhvcvdsfsdfsfdsfdfs}
          \\
          \phantom{fdsfsdfdfddf}\times \left[\frac{-2\imath 3^{3/2}\alpha i j (i+\alpha j)}
          {[\Omega_{x}(i^{2}+\alpha i j +\alpha^{2}j^{2})]^{3}}
          -24\frac{\nu_{2}(j/i){\rho_{j/i}}^{5}-
          \overline{\nu_{2}}(j/i)\overline{\rho_{j/i}}^{5}}{i^{4}}
          +o(1/i^{4})\right].\nonumber 
     \end{eqnarray}

If $\gamma\in ]0,\pi/2[$ and $j/i\to\tan(\gamma)$, then
$i j (i+\alpha j)/(i^{2}+\alpha i j +\alpha^{2}j^{2})^{3}\sim C_{\gamma,\alpha}/i^{3}$ with
$C_{\gamma,\alpha}>0$~: Theorem~\ref{Main_theorem_Green_functions}
for $\gamma\in]0,\pi/2[$ is thus an immediate consequence of~(\ref{btc}).
In that case, there was in fact no need to make an expansion with
two terms in~(\ref{conclusion_contribution_neighborhood_0})
and~(\ref{btc}) above, one single term would have been accurate enough. 

If $j/i\to \tan(0)=0$, then $i j (i+\alpha j)/(i^{2}+\alpha i j +
\alpha^{2}j^{2})^{3}\sim (j/i)/i^{3}$. By using the explicit expressions 
of $\nu_{2}(j/i)$ and $\rho_{j/i}$, see respectively~(\ref{expansion_chi_0})
and~(\ref{def_rho_j/i}), we easily obtain that $\nu_{2}(j/i){\rho_{j/i}}^{5}-
\overline{\nu_{2}}(j/i)\overline{\rho_{j/i}}^{5}=O(j/i)$. This implies that 
the sum of the two last terms in the brackets of~(\ref{btc}) equals $O((j/i)/i^{4})+
o(1/i^{4})$, which is obviously negligible w.r.t.\ $(j/i)/i^{3}$. 
Theorem~\ref{Main_theorem_Green_functions} is therefore also proved in the case $\gamma=0$.

In order to prove Theorem~\ref{Main_theorem_Green_functions} in the case 
$\gamma=\pi/2$, we would consider $(i/j)\kappa_{j/i}$ rather than $\kappa_{j/i}$,
and we would use then exactly the same analysis, we omit the details.

\medskip

Finally, in order to prove that the constant $C$ in the statement of
Theorem~\ref{Main_theorem_Green_functions} is positive, it is clearly
enough to show that $\imath [z_{0}-1/z_{0}]/[({p_{1,0}}^{2}-4p_{1,1}p_{1,-1})^{1/2} \Omega_{x}]$
is positive. 

For this, note first that from its definition, it is immediate that $\Omega_{x}<0$. Moreover, 
it follows from the beginning of Section~\ref{h_x_z} that if $x_{4}>0$, then $\imath 
[z_{0}-1/z_{0}]<0$ and $({p_{1,0}}^{2}-4p_{1,1}p_{1,-1})^{1/2}>0$~; if $x_{4}<0$, then 
$[z_{0}-1/z_{0}]>0$ and $\imath /({p_{1,0}}^{2}-4p_{1,1}p_{1,-1})^{1/2}<0$~; 
and if $x_{4}=\infty$,
by taking the limit in anyone of the two previous cases, we obtain
that $\imath [z_{0}-1/z_{0}]/[({p_{1,0}}^{2}-4p_{1,1}p_{1,-1})^{1/2}]<0$.

\paragraph{A few words about the analytical approach used here.}

  The two key steps in the proof of Theorem~\ref{Main_theorem_Green_functions} 
  are \emph{first} the explicit expression for the Green functions 
  (\ref{final_formulation_G_i_j_0}), and \emph{then} the expansion (\ref{eq_s_s_functions_2}) 
  of $H^{i_{0},j_{0}}+\tilde{H}^{i_{0},j_{0}}+h_{0,0}^{i_{0},j_{0}}-x^{i_{0}}y^{j_{0}}$ at $0$, 
  which is the numerator of the integrand in~(\ref{final_formulation_G_i_j_0}).

  It is worth noting that for any walk of $\mathscr{P}\supset \mathscr{P}_{\alpha,
  \beta}$, it is still possible to obtain~(\ref{final_formulation_G_i_j_0}) --~without 
  additional technical details, besides. On the other hand, obtaining explicitly the expansion 
  at $0$ of $H^{i_{0},j_{0}}+\tilde{H}^{i_{0},j_{0}}+h_{0,0}^{i_{0},j_{0}}-x^{i_{0}}
  y^{j_{0}}$ in the general setting seems us quite difficult -- all the more so as this 
  expansion has to comprise several terms, since \textit{a priori} it could happen 
  that several terms lead to non-negligible contributions in the asymptotic of the 
  Green functions. 

  It is more imaginable (though technically difficult) to obtain this expansion 
  for the walks for which
  an equality like~(\ref{eq_s_s_functions_2}) holds~; unfortunately, having such 
  an equality is far for being systematic, even for the processes associated with 
  a finite group $W$~: for example, the random walk with jump probabilities
  $p_{1,1}=p_{0,-1}=p_{-1,0}=1/3$ has manifestly a group $W$ of order six, but does not 
  verify an identity like~(\ref{eq_s_s_functions_2}).

\paragraph{Acknowledgments.}
I would like to thank Professor Bougerol for pointing out 
the interest and the relevance of the topic discussed in this paper.
I also warmly thank Professor Kurkova who introduced me to this 
field of research, as well as for her constant help and support 
during the elaboration of this article. Finally, I thank an
anonymous referee for his/her valuable comments, that have 
led me to substantially improve the first version of this paper.

\footnotesize

\bibliographystyle{alpha}
\bibliography{SU3f}

\def\cprime{$'$}
\begin{thebibliography}{DSW60}

\bibitem[Bia91]{Bi1}
Philippe Biane.
\newblock Quantum random walk on the dual of {${\rm SU}(n)$}.
\newblock {\em Probab. Theory Related Fields}, 89(1):117--129, 1991.

\bibitem[Bia92a]{Bi5}
Ph. Biane.
\newblock \'{E}quation de {C}hoquet-{D}eny sur le dual d'un groupe compact.
\newblock {\em Probab. Theory Related Fields}, 94(1):39--51, 1992.

\bibitem[Bia92b]{Bi3}
Philippe Biane.
\newblock Minuscule weights and random walks on lattices.
\newblock In {\em Quantum probability \& related topics}, QP-PQ, VII, pages
  51--65. World Sci. Publ., River Edge, NJ, 1992.

\bibitem[Col04]{COL}
Beno{\^{\i}}t Collins.
\newblock Martin boundary theory of some quantum random walks.
\newblock {\em Ann. Inst. H. Poincar\'e Probab. Statist.}, 40(3):367--384,
  2004.

\bibitem[DSW60]{DOO1}
J.~L. Doob, J.~L. Snell, and R.~E. Williamson.
\newblock Application of boundary theory to sums of independent random
  variables.
\newblock In {\em Contributions to probability and statistics}, pages 182--197.
  Stanford Univ. Press, Stanford, Calif., 1960.

\bibitem[Dyn69]{Dynkin}
Eugene Dynkin.
\newblock The boundary theory of {M}arkov processes (discrete case).
\newblock {\em Uspehi Mat. Nauk}, 24(2 (146)):3--42, 1969.

\bibitem[FIM99]{FIM}
Guy Fayolle, Roudolf Iasnogorodski, and Vadim Malyshev.
\newblock {\em Random walks in the quarter-plane}, volume~40 of {\em
  Applications of Mathematics (New York)}.
\newblock Springer-Verlag, Berlin, 1999.
\newblock Algebraic methods, boundary value problems and applications.

\bibitem[IR09]{III}
Irina Ignatiouk-Robert.
\newblock Martin boundary of a killed random walk on $\mathbb{Z}_{+}^{d}$.
\newblock {\em Preprint : \verb|http://arxiv.org/abs/0909.3921|}, pages 1--49,
  2009.

\bibitem[JS87]{JS}
Gareth Jones and David Singerman.
\newblock {\em Complex functions}.
\newblock Cambridge University Press, Cambridge, 1987.
\newblock An algebraic and geometric viewpoint.

\bibitem[KR09]{KR}
Irina Kurkova and Kilian Raschel.
\newblock Random walks in $(\mathbb{Z}_+)^2$ with non-zero drift absorbed at
  the axes.
\newblock {\em {T}o appear in {B}ulletin de la {S}oci\'et\'e {M}ath\'ematiques
  de {F}rance. Preprint : \verb|http://arxiv.org/abs/0903.5486|}, 2009.

\bibitem[NS66]{NS}
P.~Ney and F.~Spitzer.
\newblock The {M}artin boundary for random walk.
\newblock {\em Trans. Amer. Math. Soc.}, 121:116--132, 1966.

\bibitem[PW92]{PW}
Massimo Picardello and Wolfgang Woess.
\newblock Martin boundaries of cartesian products of {M}arkov chains.
\newblock {\em Nagoya Math. J.}, 128:153--169, 1992.

\bibitem[Ras09]{Ras}
Kilian Raschel.
\newblock Random walks in the quarter plane absorbed at the boundary : Exact
  and asymptotic.
\newblock {\em Preprint : \verb|http://arxiv.org/abs/0902.2785|}, pages 1--33,
  2009.

\bibitem[Spi64]{SPI}
Frank Spitzer.
\newblock {\em Principles of random walk}.
\newblock The University Series in Higher Mathematics. D. Van Nostrand Co.,
  Inc., Princeton, N.J.-Toronto-London, 1964.

\end{thebibliography}

\end{document}